\newtheorem{lemma}{Lemma}[section]
\newtheorem{theorem}[lemma]{Theorem}
\newtheorem{corollary}[lemma]{Corollary}
\newtheorem{proposition}[lemma]{Proposition}
\theoremstyle{plain}
\newtheorem{defi}[lemma]{Definition}
\theoremstyle{definition}
\newtheorem{example}[lemma]{Example}
\DeclareMathOperator{\id}{Id}
\DeclareMathOperator{\GL}{GL}
\DeclareMathOperator{\reg}{Reg}
\DeclareMathOperator{\End}{End}
\DeclareMathOperator{\Tr}{Tr}
\DeclareMathOperator{\Irr}{Irr}
\DeclareMathOperator{\Pl}{Pl}
\DeclareMathOperator{\SPl}{SPl}
\DeclareMathOperator{\sch}{sch}
\DeclareMathOperator{\scl}{scl}
\DeclareMathOperator{\crs}{crs}
\DeclareMathOperator{\nst}{nst}
\DeclareMathOperator{\SInd}{SInd}
\DeclareMathOperator{\Res}{Res}
\DeclareMathOperator{\Sing}{Sing}
\DeclareMathOperator{\Leb}{Leb}
\newcommand{\R}{\mathbb{R}}
\newcommand{\C}{\mathbb{C}}
\newcommand{\N}{\mathbb{N}}
\newcommand{\K}{\mathcal{K}}
\title{A Plancherel measure associated to set partitions and its limit}
\author{Dario De Stavola \thanks{email: dario.destavola@math.uzh.ch\newline Institute of Mathematics, University of Zurich, Winterthurerstrasse 190, 8057, Zurich, Switzerland.\newline Keywords: supercharacter, set partition, Plancherel measure.}}
\begin{document}
\maketitle

\begin{abstract}
In recent years  increasing attention has been paid on the area of supercharacter theories, especially to those of the upper unitriangular group. A particular supercharacter theory, in which supercharacters are indexed by set partitions, has several interesting properties, which make it object of further study. We define a natural generalization of the Plancherel measure, called superplancherel measure, and prove a limit shape result for a random set partition according to this distribution. We also give a description of the asymptotical behavior of two set partition statistics related to the supercharacters. The study of these statistics when the set partitions are uniformly distributed has been done by Chern, Diaconis, Kane and Rhoades. 
\iffalse

In this setting we generalize the notion of Plancherel measure, which we call superplancherel measure. We then study this probability in a particular supercharacter theory introduced by Bergeron and Thiem, in which supercharacters are indexed by set partitions. We find a limit shape result for superplancherel distributed set partitions through the study of particular statistics. Our work resembles that of Logan and Shepp for the limit shape of Plancherel distributed partitions.\fi
\end{abstract}

\section{Introduction}
Let $p$ be a prime number, $q$ a power of $p$, and $\mathbb{K}$ the finite field of order $q$ and characteristic $p$. Consider $U_n=U_n(\mathbb{K})$ to be the group of upper unitriangular matrices with entries in $\mathbb{K}$, it is known that the description of conjugacy classes and irreducible characters of $U_n$ is a wild problem, in the sense described, for example, by Drodz in \cite{drozd1980tame}. To bypass the issue, Andr{\'e} \cite{andre1995basic} and Yan \cite{yan2010representations} set the foundations of what is now known as ``supercharacter theory'' (in the original works it was called ``basic character theory''). The idea is to meld together some irreducible characters and conjugacy classes (called respectively supercharacters and superclasses), in order to have characters which are easy enough to be tractable but still carry information of the group. In particular, one obtains a smaller character table, which is required to be a square matrix. As an application, in \cite{arias2004super}, Arias-Castro, Diaconis and Stanley described random walks on $U_n$ utilizing only the supercharacter table (usually the complete character table is required). In \cite{diaconis2008supercharacters}, Diaconis and Isaacs formalized the axioms of supercharacter theory, generalizing the construction from $U_n$ to algebra groups.\smallskip

Among the various supercharacter theories for $U_n$ a particular nice one, hinted in \cite{aguiar2012supercharacters} and described by Bergeron and Thiem in \cite{bergeron2013supercharacter}, has the property that the supercharacters take integer values on superclasses. This is particularly interesting because of a result of Keller \cite{keller2014generalized}, who proves that for each group $G$ there exists a unique finest supercharacter theory with integer values. Although it is not yet known if Bergeron and Thiem's theory is the finest integral one, it has remarkable properties which make it worth of a deeper analysis. In this theory the supercharacters of $U_n$ are indexed by set partitions of $\{1,\ldots,n\}$ and they form a basis for the Hopf algebra of superclass functions. This Hopf algebra is isomorphic to the algebra of symmetric functions in noncommuting variables. Moreover, the supercharacter table decomposes as the product of a lower triangular matrix and an upper triangular matrix.
\smallskip

In the theory introduced by Bergeron and Thiem, the characters depend on the following three statistics defined for a set partition $\pi$ of $[n]$:
\begin{itemize}
\item $d(\pi)$, the number of arcs of $\pi$;
 \item$\dim(\pi)$, that is, the sum $\sum \max(B)-\min(B)$, where the sum runs through the blocks $B$ of $\pi$;
 \item $\crs(\pi)$, the number of crossings of $\pi$.
\end{itemize}
More precisely, we have that if $\chi^{\pi}$ is the supercharacter associated to the set partition $\pi$ then the dimension is $\chi^{\pi}(1)=q^{\dim(\pi)-d(\pi)}$ and $\langle\chi^{\pi},\chi^{\pi}\rangle=q^{\crs(\pi)}$. 

In the setting of probabilistic group theory one is interested in the study of statistics of the ``typical'' irreducible representation of the group. A natural probability distribution is the uniform distribution; in \cite{chern2014closed} and \cite{chern2015central} Chern, Diaconis, Kane and Rhoades study the statistics $\dim$ and $\crs$ for a uniform random set partition, proving formulas for the moments of $\dim(\pi)$ and $\crs(\pi)$ and, successively, a central limit theorem for these two statistics. These results imply that, for a uniform random set partition $\pi$ of $n$,
\[\dim(\pi)-d(\pi)= \frac{\alpha_n-2}{\alpha_n}n^2+O_P\left(\frac{n}{\alpha_n}\right),\qquad \crs(\pi)= \frac{2\alpha_n-5}{4\alpha_n^2}n^2+O_P\left(\frac{n}{\alpha_n}\right),\]
where $\alpha_n$ is the positive real solution of $u e^u=n+1$, so that $\alpha_n=\log n-\log\log n+o(1).$
\smallskip

In representation theory another natural distribution is the Plancherel measure, which is a discrete probability measure associated to the irreducible characters of a finite group. The Plancherel measure has received vast coverage in the literature, especially in the case of the symmetric group $S_n$. Since the irreducible characters of $S_n$ are indexed by the partitions of $n$, the problem of investigating longest increasing subsequences of a uniform random permutation is equivalent to studying the first rows of a Plancherel distributed integer partition (see \cite{romik2015surprising}). This prompted the study of asymptotics of the Plancherel measure, and in 1977 a limit shape result for a random partition was proved independently by Kerov and Vershik \cite{kerov1977asymptotics} and Logan and Shepp \cite{LoganShepp1977}. The result was later improved to a central limit theorem  by Kerov \cite{ivanov2002kerov}. Moreover, it was proved by Borodin, Okounkov and Olshanski \cite{BorodinOkounkovOlshanski2000} that the rescaled limiting distribution of the first $k$ rows of an integer partition coincides with the one of the largest $k$ eigenvalues of a GUE random matrix.
\smallskip

From the study of the Plancherel measure of $S_n$ has followed a theory regarding the Plancherel growth process. Indeed, there exist natural transition measures between partitions of $n$ and partitions of $n+1$, which generate a Markov process whose marginals are the Plancherel distributions. The transition measures have a nice combinatorial description, see \cite{kerov1993transition}.
\medskip

In this paper we generalize the notion of Plancherel measure  to adapt it to supercharacter theories. We call the measure associated to a supercharacter theory \emph{superplancherel measure}. We show that for a tower of groups $\{1\}=G_0\subseteq G_1\subseteq\ldots$, each group endowed with a consistent supercharacter theory, there exists a nontrivial transition measure which yields a Markov process; the marginals of this process are the superplancherel measures. In order to do so, we generalize a construction of superinduction for algebra groups to general finite groups. Such a construction was introduced by Diaconis and Isaacs in \cite{diaconis2008supercharacters} and developed by Marberg and Thiem in \cite{marberg2009superinduction}.
\smallskip

We then consider the superplancherel measure associated to the supercharacter theory of $U_n$ described by Bergeron and Thiem. In this setting, the superplancherel measure has an explicit formula depending on the statistics $\dim(\pi)$ and $\crs(\pi)$; we give a direct combinatorial construction of such a measure.
\smallskip

The main result of the paper is a limit shape for a random superplancherel distributed set partition. In order to formulate this result we immerse set partitions into the space of subprobabilities (\emph{i.e.}, measures with total weight less than or equal to $1$) of the unit square $[0,1]^2$ with some other properties. This embedding is similar to that of permutons for random permutations, see for example \cite{glebov2015finitely}. Given a set partition $\pi$ we refer to the corresponding subprobability as $\mu_{\pi}$. We describe a measure $\Omega$ such that
\begin{theorem}\label{main result 1}
  For each $n\geq 1$ let $\pi_n$ be a random set partition of $n$ distributed with the superplancherel measure $\SPl_n$, then
 \[\mu_{\pi_n}\to\Omega\mbox{ almost surely} \]
 where the convergence is the weak* convergence for measures.
\end{theorem}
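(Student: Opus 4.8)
The plan is to recast the almost-sure limit shape as a deterministic variational problem for a functional $F$ on a compact space of measures, to identify $\Omega$ as the unique maximiser of $F$, and then to transport this back to the random model by a large-deviation estimate at speed $n^2$ followed by Borel--Cantelli. Write $\mu_\pi=\frac1n\sum_{(i,j)}\delta_{(i/n,\,j/n)}$, the sum running over the arcs $(i,j)$ of $\pi$, and let $\mathcal M$ be the set of sub-probability measures supported on $\{0\le x\le y\le1\}$ both of whose marginals are absolutely continuous with density at most $1$. Since the left endpoints of the arcs are pairwise distinct, as are the right endpoints, each marginal of $\mu_\pi$ is dominated by $\frac1n\sum_{k\le n}\delta_{k/n}$, so every $\mu_\pi$ lies (in the limit) in $\mathcal M$; conversely every element of $\mathcal M$ is a weak* limit of such $\mu_\pi$, because any family of arcs with distinct, strictly increasing endpoints is automatically the arc set of a genuine set partition. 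The set $\mathcal M$ is convex and, by Banach--Alaoglu together with the (weak*-closed) marginal constraints, weak* compact.

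Next I would express the relevant statistics as functionals of $\mu_\pi$. From $\dim(\pi)=\sum_{\text{arcs}}(j-i)$ one gets $\dim(\pi)/n^2=\int(y-x)\,d\mu_\pi$ exactly, and the crossing count satisfies $\crs(\pi)/n^2=\crs[\mu_\pi]$ with $\crs[\mu]:=\iint\mathbf 1[x_1<x_2<y_1<y_2]\,d\mu\,d\mu$, while $d(\pi)\le n$. Feeding these into the superplancherel formula $\SPl_n(\pi)=q^{-\binom n2}\,q^{\,2\dim(\pi)-2d(\pi)-\crs(\pi)}$ (which uses $\chi^\pi(1)=q^{\dim-d}$, $\langle\chi^\pi,\chi^\pi\rangle=q^{\crs}$ and $|U_n|=q^{\binom n2}$) gives, uniformly in $\pi$,
\[\tfrac1{n^2}\log_q\SPl_n(\pi)=-\tfrac12+F[\mu_\pi]+o(1),\qquad F[\mu]:=2\int(y-x)\,d\mu-\crs[\mu].\]
At the scale $n^2$ the measure is therefore governed purely by $F$: the number of set partitions is the Bell number $e^{O(n\log n)}=e^{o(n^2)}$, so it contributes no entropy and $\Omega$ must be the maximiser of $F$ over $\mathcal M$.

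Solving this variational problem is the heart of the matter and the main obstacle. The linear term of $F$ is weak* continuous, and since every $\mu\in\mathcal M$ has absolutely continuous marginals the product $\mu\otimes\mu$ gives zero mass to the hyperplanes $\{x_1=x_2\}$, $\{y_1=y_2\}$, $\{x_2=y_1\}$ bounding the crossing region, so the portmanteau theorem makes $\crs[\cdot]$, hence $F$, continuous on $\mathcal M$. Compactness then yields a maximiser, and the exact partition function $Z_n=|U_n|=q^{\binom n2}$ together with a recovery sequence forces $\max_{\mathcal M}F=\lim n^{-2}\log_q|U_n|=\tfrac12$. The difficulty is uniqueness: $-\crs[\cdot]$ is a quadratic form whose symmetrised kernel $\mathbf 1[\text{crossing}]$ is not positive semidefinite, so concavity is not automatic. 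My plan is to derive the Euler--Lagrange equation for an optimal density---an integral equation coupled to the marginal constraints, active on part of the square as in an obstacle problem---to solve it explicitly using the reflection symmetry $(x,y)\mapsto(1-y,1-x)$ that preserves both $F$ and $\mathcal M$, thereby producing the density of $\Omega$, and finally to prove that this critical point is the unique global maximiser by a dedicated comparison (for instance strict concavity of $F$ along segments once restricted to the region where the marginal constraints are saturated, or a rearrangement argument).

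Granting that $\Omega$ is the unique maximiser and $F$ is continuous on the compact set $\mathcal M$, the limit shape follows by concentration. Fix a weak* neighbourhood $V$ of $\Omega$ and put $\delta:=\tfrac12-\sup_{\mathcal M\setminus V}F>0$. Using $\SPl_n(\pi)\le q^{-\binom n2}q^{\,n^2 F[\mu_\pi]}$ and summing over the at most $B_n$ set partitions with $\mu_\pi\notin V$,
\[\SPl_n(\mu_{\pi_n}\notin V)\le B_n\,q^{-\binom n2}\,q^{\,n^2(\frac12-\delta)}=B_n\,q^{\,n/2}\,q^{-\delta n^2}=e^{-\delta' n^2}\]
for all large $n$, since $B_n q^{n/2}=e^{o(n^2)}$. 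These probabilities are summable, so on the product space $\prod_n(\{\text{set partitions of }[n]\},\SPl_n)$ the Borel--Cantelli lemma shows that $\mu_{\pi_n}\in V$ eventually, almost surely. Intersecting the resulting full-measure events over a countable weak* neighbourhood basis of $\Omega$ gives $\mu_{\pi_n}\to\Omega$ almost surely, which is the claim.
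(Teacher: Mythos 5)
Your overall architecture --- embed set partitions as measures, write $\log_q\SPl_n(\pi)=-n^2 I(\mu_\pi)+O(n)$ for a continuous functional on a compact space of measures, show the optimum is uniquely attained at $\Omega$, then conclude by a union bound over the $e^{O(n\log n)}$ set partitions and Borel--Cantelli --- is exactly the paper's strategy, and your concentration step at the end is essentially identical to the paper's. But there is a genuine gap at what you yourself call ``the heart of the matter'': you never prove that $\Omega$ is the unique maximiser of $F[\mu]=2\int(y-x)\,d\mu-\crs[\mu]$. What you offer instead is a plan (Euler--Lagrange equation, reflection symmetry, ``a dedicated comparison, for instance strict concavity \dots or a rearrangement argument''), and you explicitly note that the obvious convexity route fails because the crossing kernel is not positive semidefinite. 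Without uniqueness the entire concentration argument collapses: knowing only that $\max F=\tfrac12$ (which your partition-function argument does give) tells you nothing about \emph{where} the mass concentrates. Moreover, an Euler--Lagrange approach is delicate here because the optimiser $\Omega$ is singular (supported on the segment $y=1-x$), so there is no density to write a first-order condition for; the maximum sits on the boundary of the constraint set. The paper's actual route is more elementary and is worth knowing: it splits $I(\mu)=\tfrac12-2I_1(\mu)+I_2(\mu)$ and proves (i) $I_1(\mu)\le\tfrac14$, with equality iff $\mu$ has \emph{uniform} marginals on $[0,1/2]\times[1/2,1]$, by pushing $\mu$ forward under $(x,y)\mapsto(f_\mu(x),g_\mu(y))$ with $f_\mu(x)=\mu([0,x]\times[0,1])$ and $g_\mu(y)=1-\mu([0,1]\times[1-y,1])$ (``squeezing'' toward the top-left corner, which can only increase $I_1$ and yields the explicit value $l_\mu(1-l_\mu)$ for a measure of total mass $l_\mu$); and (ii) if $\mu$ has uniform marginals on that rectangle and $I_2(\mu)=0$, then the joint distribution function is forced to be $\min(a,b)$ by a three-region argument ($S,T,Q$), whence $\mu=\Omega$. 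Since $I(\mu)=0$ forces $I_2(\mu)=0$ and $I_1(\mu)=\tfrac14$ simultaneously, uniqueness follows. You would need to supply an argument of this calibre to close your proof.

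Two smaller points. First, your space $\mathcal M$ (marginals absolutely continuous with density at most $1$) does not actually contain the empirical measures $\mu_\pi=\frac1n\sum\delta_{(i/n,j/n)}$, whose marginals are atomic; yet your final bound needs $d(\mu_\pi,\Omega)>\epsilon\Rightarrow F[\mu_\pi]<\tfrac12-\delta$ for the measures $\mu_\pi$ themselves, not just for their limits. The paper sidesteps this by smearing each arc over the square $[\frac{i-1}{n},\frac in]\times[\frac{j-1}{n},\frac jn]$, so that $\mu_\pi$ lies exactly in the compact set $\Gamma$ of subprobabilities with \emph{sub}uniform marginals on which $I$ is proved continuous; you should either do the same or enlarge $\mathcal M$ and recheck the portmanteau step for the crossing functional there. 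Second, $\crs(\pi)/n^2=\crs[\mu_\pi]$ is not exact: coincident and adjacent arc endpoints contribute corrections of order $d(\pi)/n^2=O(1/n)$ (the paper computes these as $-\frac14 d(\pi)-\frac12\#\{\text{adjacent arcs}\}$), which is harmless at speed $n^2$ but should not be asserted as an identity.
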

The measure $\Omega$ is the uniform measure on the set $\{(x,1-x)\mbox{ s.t. }x\in[0,1/2]\}$ of total weight $1/2$. Informally, we can say that a set partition chosen at random with the superplancherel measure is asymptotically close to the the following shape:
 \[
 \begin{tikzpicture}[scale=0.5]
\draw [fill] (0,0) circle [radius=0.05];
\node [below] at (0,0) {1};
\draw [fill] (1,0) circle [radius=0.05];
\node [below] at (1,0) {2};
\draw [fill] (2,0) circle [radius=0.05];
\node [below] at (2,0) {3};
\draw [fill] (3.5,0) circle [radius=0.03];
\draw [fill] (3.75,0) circle [radius=0.03];
\draw [fill] (4,0) circle [radius=0.03];
\draw [fill] (5.5,0) circle [radius=0.05];
\draw [fill] (6.5,0) circle [radius=0.05];
\draw [fill] (7.5,0) circle [radius=0.05];
\node [below] at (7.5,0) {n};
\draw (0,0) to[out=70, in=110] (7.5,0);
\draw (1,0) to[out=70, in=110] (6.5,0);
\draw (2,0) to[out=70, in=110] (5.5,0);
\end{tikzpicture}\]
In the process, we obtain asymptotic results for the statistics $\dim(\pi)$ and $\crs(\pi)$ when $\pi$ is chosen at random with the superplancherel measure:
\begin{corollary}\label{main result 2}
  For each $n\geq 1$ let $\pi_n$ be a random set partition of $n$ distributed with the superplancherel measure $\SPl_n$, then
 \[\frac{\dim(\pi)}{n^2}\to\frac{1}{4}\mbox{ a.s.},\qquad \crs(\pi)\in O_P(n^2).\] 
\end{corollary}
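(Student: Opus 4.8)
The plan is to read both statistics off the limiting measure $\Omega$ supplied by Theorem~\ref{main result 1}. Recall that in the embedding $\mu_{\pi}=\frac1n\sum_{(i,j)}\delta_{(i/n,j/n)}$ is the normalised empirical measure of the arcs $(i,j)$ of $\pi$, so that its total mass is $d(\pi)/n$ and integrating a test function against $\mu_{\pi}$ records a weighted sum over arcs. The governing observation is that $\dim$ is a \emph{linear} functional of $\mu_{\pi}$ — a single integral — and so transfers at once under weak* convergence, whereas $\crs$ is a \emph{quadratic} functional, a double integral against $\mu_\pi\times\mu_\pi$ built from a discontinuous indicator; the latter is the delicate point.

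For the dimension I would first record $\dim(\pi)=\sum_{(i,j)}(j-i)$, the sum over arcs, obtained by writing each $\max(B)-\min(B)$ as a telescoping sum over the consecutive elements of the block $B$. Dividing by $n^2$ and using the definition of $\mu_{\pi}$ gives
\[
\frac{\dim(\pi)}{n^2}=\int_{[0,1]^2}(y-x)\,d\mu_{\pi}(x,y).
\]
Since $f(x,y)=y-x$ is continuous and bounded on the compact square $[0,1]^2$, Theorem~\ref{main result 1} yields $\dim(\pi_n)/n^2\to\int(y-x)\,d\Omega$ almost surely. As $\Omega$ is the uniform measure of mass $1/2$ on $\{(x,1-x):x\in[0,1/2]\}$, on which $y-x=1-2x$, this limit is $\int_0^{1/2}(1-2x)\,dx=1/4$, giving the first claim.

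For the crossings I would use the parallel identity
\[
\frac{\crs(\pi)}{n^2}=(\mu_{\pi}\times\mu_{\pi})(C),\qquad C=\{(x_1,y_1,x_2,y_2):x_1<x_2<y_1<y_2\},
\]
since an ordered pair of arcs lying in $C$ is exactly a crossing. The crude estimate $\crs(\pi)\le\binom{d(\pi)}{2}\le\binom{n}{2}$ already delivers the stated bound $\crs(\pi)\in O_P(n^2)$ deterministically. The limit shape, however, gives more: from $\mu_{\pi_n}\to\Omega$ one obtains $\mu_{\pi_n}\times\mu_{\pi_n}\to\Omega\times\Omega$ weak*, and since $\mathbb{1}_C$ is discontinuous the portmanteau theorem requires checking that $\Omega\times\Omega$ charges neither $C$ nor its boundary. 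On the support of $\Omega\times\Omega$ one has $y_1=1-x_1$ and $y_2=1-x_2$ with $x_1,x_2\in[0,1/2]$, so even the closed condition $x_1\le x_2\le y_1\le y_2$ forces $x_1=x_2$; as $\Omega$ is non-atomic this diagonal is $(\Omega\times\Omega)$-null. Hence $\limsup_n\crs(\pi_n)/n^2\le(\Omega\times\Omega)(\overline{C})=0$, so in fact $\crs(\pi_n)/n^2\to 0$ almost surely, strengthening the asserted $O_P(n^2)$.

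The main obstacle is precisely this last step: unlike $\dim$, the crossing functional is not weak*-continuous, so the passage to the limit hinges on verifying that $\Omega\times\Omega$ assigns zero mass to the closed crossing region. This is where the concentration of $\Omega$ on the antidiagonal — which geometrically expresses that asymptotically all arcs are \emph{nested} rather than crossing — does the work.
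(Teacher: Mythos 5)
Your argument is correct, but it reaches the conclusion by a different route than the paper. The paper does not deduce the corollary from Theorem \ref{main result 1} at all: it reruns the entropy estimate directly, defining $N^n_{\epsilon,\dim}=\{\pi\vdash[n]\mbox{ s.t. }|\dim(\pi)/n^2-1/4|>\epsilon\}$, observing that $I_1(\mu_\pi)\le 1/4$ and $I_2(\mu_\pi)\ge 0$ force $I(\mu_\pi)>\epsilon$ on this set, and then using the bound $\SPl_n(N^n_{\epsilon,\dim})<\exp(-n^2\epsilon\log q+O(n\log n))$ together with Borel--Cantelli (the crossing case being ``done similarly''). You instead take the almost sure weak* convergence $\mu_{\pi_n}\to\Omega$ as given and transfer the two statistics through the functionals $I_1$ and $I_2$: for $\dim$ this is immediate since $y-x$ is bounded continuous, and for $\crs$ you replace the paper's continuity lemma for $I_2$ (proved via the mapping theorem and subuniform marginals) by a portmanteau upper bound on the closed set $\overline{C}$, checking that $\Omega\otimes\Omega$ does not charge it because the antidiagonal support forces $x_1=x_2$. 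Both derivations are sound; the paper's yields explicit exponential deviation bounds as a by-product, while yours is the more economical ``corollary of the theorem'' and, like the paper's implicit conclusion, actually gives the stronger statement $\crs(\pi_n)/n^2\to 0$ a.s.\ rather than the stated $O_P(n^2)$ (which, as you note, is already deterministic). Two harmless inaccuracies to fix: the paper's $\mu_\pi$ is a sum of uniform measures $\frac1n\lambda_{A_{i,j}}$ on small squares, not Dirac masses at $(i/n,j/n)$, and correspondingly the identity $\crs(\pi)=n^2 I_2(\mu_\pi)$ holds only up to an $O(n)$ correction coming from coincident and adjacent arcs; neither affects the limits.
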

These results will be prove
\smallskip

As mentioned, the main idea is to consider set partitions as particular measures of the unit square. With this transformation the statistics $\dim(\pi)$ and $\crs(\pi)$ can be seen as integrals of the measure $\mu_{\pi}$. We use an entropy argument to delimitate a set of set partitions of maximal probability. Finally, we relate the results on the entropy into the weak* topology of measures of $[0,1]^2.$
\smallskip

The combinatorial interpretation of the superplancherel measure for $U_n$ allows us to have computer generated superplancherel random set partitions $\pi\vdash[n]$ for fairly large $n$. In Figure \ref{fig:boat1} we present one of such $\mu_{\pi}$ for $\pi\vdash[200]$; we observe that it is indeed closed to $\Omega$.
\begin{figure}[H]
  \begin{center}
  \[\includegraphics[width=4.3cm,height=4.3cm]{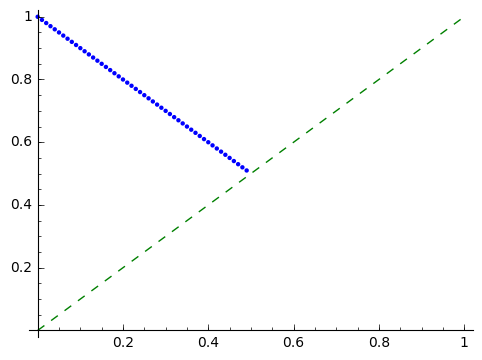}\qquad\qquad
\includegraphics[width=4.3cm,height=4.3cm]{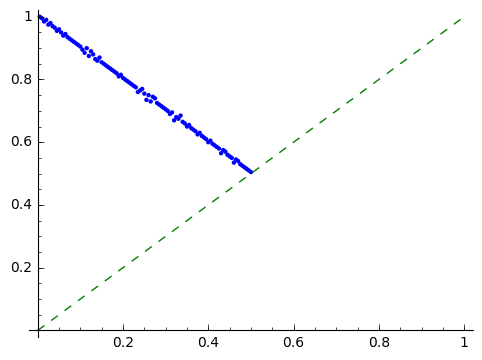}\]
  \caption{Description of a random superplancherel distributed set partition: the left image is the measure $\mu_{\Omega}$ associated to the set partition $\Omega$; on the right there is a computer generated random measure $\mu_{\pi}$ for $\pi\vdash[200]$. The algorithm we use for the program that generates a big random set partition is based on the combinatorial interpretation in Section \ref{section combinatorial interpretation}.}\end{center}
  \label{fig:boat1}
\end{figure}

\subsection{Outline of the paper}
In section \ref{section preliminries} we recall some basic notions of representation theory and supercharacter theory; we define the superplancherel measure and a transition measure; in section \ref{section supercharacter theory} we define the most important statistics of set partitions in the topic of the supercharacter theory of $U_n$, we find an explicit formula for the superplancherel measure, and we give a combinatorial interpretation. In section \ref{section set partitions as measures} we see set partitions as measures in $[0,1]^2$ and we study the statistics $\dim(\pi)$ and $\crs(\pi)$ in this setting. Finally, in section \ref{section final results} we prove the limit shape result for random set partitions and the result on the asymptotic behavior of $\dim(\pi)$ and $\crs(\pi)$ (respectively Theorem \ref{main result 1} and Corollary \ref{main result 2}.

\section{Preliminaries}\label{section preliminries}
\subsection{Reminders on character theory}
\begin{defi}
 Let $G$ be a finite group and $V$ a finite dimensional $\C$-vector space. An homomorphism $\pi\colon G\to \GL(V)$ is called a \emph{$\C$-linear representation}.
\end{defi}
The function $\pi$ can be extended by linearity to a homomorphism of $\C$-algebras, $\pi\colon \C G\to\End_{\C}(V)$. In this way we obtain an action of the group algebra $\C G$ on $V$, thus defining a left $\C G$-module $(V,\pi)$. We will refer to it simply as $V$ if the action is clear from the context.
\begin{defi}
 Given a $\C G$-module $(V,\pi)$ the character afforded by $V$ is the map $\xi_V\colon G\to\C$ defined by $\xi_V(g)=\Tr(g,V)$, that is, the trace of the representation $\pi(g)$ seen in matrix form. Indeed it is well known that the character does not depend on the basis chosen for $V$.
\end{defi}
The value $\xi_V(1)=\Tr(1,V)$ is equal to $\dim_{\C}(V)$ and it is called the \emph{degree} of the representation.
\smallskip

 Let $(V,\pi)$ be a $\C G$-module, then $(W,\pi)$ is a \emph{submodule} if it is a module and $W\subseteq V$. The module $(V,\pi)$ is said to be \emph{irreducible} if the only submodules of $V$ are $\{0\}$ and $V$. By Manschke's Theorem every $\C G$-module decomposes as a direct sum of irreducible $\C G$-modules. We can choose a set of irreducible modules $V_1,\ldots,V_t$ affording the characters $\xi_1,\ldots,\xi_t$ respectively. We define the set
 \[\Irr(G):=\{\xi_1,\ldots,\xi_t\}\]
 which is independent from the choice of the irreducible modules $V_1,\ldots,V_t$. In particular we can see $\C G$ as a module acting on itself. The character $\rho_G$ afforded by $\C G$ is called the \emph{regular character} and decomposes thus:
\[\rho_G=\sum_{\xi\in\Irr(G)}\xi(1)\xi;\]
it is easy to see that $\rho_G(g)=|G|\delta_{g,1}$, where $\delta_{x,y}$ is the Kronecher delta, equal to $1$ if $x=y$ and $0$ otherwise. As a consequence we obtain the formula
\[|G|=\sum_{\xi\in\Irr(G)}\xi(1)^2.\]
This leads to a discrete probability measure on the set of irreducible characters, called the \emph{Plancherel measure}: $\Pl_G(\xi)=\xi(1)^2/|G|$.

We recall the \emph{Frobenius scalar product} between two functions $\chi,\zeta\colon G\to \C$:
\[\langle\chi,\zeta\rangle:=\frac{1}{|G|}\sum_{g\in G}\chi(g)\overline{\zeta(g)}.\]
Characters form an orthonormal basis, with respect to the Frobenius scalar product, of the algebra of class functions, \emph{i.e.} complex valued functions which take constant values on conjugacy classes. This property is known as the character orthogonality relations of the first kind: if $\xi_1,\xi_2\in\Irr(G)$ then $\langle \xi_2,\xi_2\rangle=\delta_{\xi_1,\xi_2}$.
If $\chi,\xi$ are characters of $G$ and $\xi$ is irreducible we say that $\xi$ is a \emph{constituent} of $\chi$ if $\langle \chi,\xi\rangle\neq 0$. Moreover, we call $I(\chi):=\{\xi\in\Irr(G)\mbox{ s.t. }\langle\chi,\xi\rangle\neq 0\}$. It is immediate to see that for each character $\chi$ of $G$ we have
\[\chi=\sum_{\xi\in\Irr(G)}\langle\chi,\xi\rangle\xi=\sum_{\xi\in I(\chi)}\langle\chi,\xi\rangle\xi\]
\subsection{Supercharacter theory}
We recall the definition of supercharacter theory. Notice that this definition coincides with the one introduced in \cite{diaconis2008supercharacters} due to \cite[Lemma 2.1]{diaconis2008supercharacters}.
\begin{defi}\label{definition supercharacter theory}
 A \emph{supercharacter theory} of a finite group $G$ is a pair $(\scl(G),\sch(G))$ where $\scl(G)$ is a set partition of $G$ and $\sch(G)$ an orthogonal set of nonzero characters of $G$ (not necessarily irreducible) such that:
 \begin{enumerate}
  \item $|\scl(G)|=|\sch(G)|$;
  \item every character $\chi\in\sch(G)$ takes a constant value on each member $\mathcal{K}\in\scl(G)$;
  \item each irreducible character of $G$ is a constituent of one, and only one, of the characters $\chi\in\sch(G)$.
 \end{enumerate}
\end{defi}
The elements $\mathcal{K}\in\scl(G)$ are called \emph{superclasses}, while the characters $\chi\in\sch(G)$ are \emph{supercharacters}. It is easy to see that every element $\mathcal{K}\in \scl(G)$ is always a union of conjugacy classes. Since a supercharacter $\chi\in\sch(G)$ is always constant on superclasses we will sometimes write $\chi(\mathcal{K})$ instead of $\chi(g)$, where $\mathcal{K}\in\scl(G)$ is a superclass and $g\in\mathcal{K}$. Observe that irreducible character theory is a supercharacter theory.
\begin{example}
For every finite group $G$ there are two trivial supercharacter theories.
\begin{itemize}
 \item The irreducible character theory, where $\scl(G)$ is the set of conjugacy classes of $G$ and $\sch(G)$ is the set of irreducible characters.
 \item The supercharacter theory where $\scl(G)=\{\{1_G\},G\setminus\{1_G\}\}$ and $\sch(G)=\{\id_G,\rho_G-\id_G\}$.
\end{itemize}
\end{example}
\subsection{Superplancherel measure}
\begin{defi}\label{definition of superplancherel}
Fix a supercharacter theory $T=(\scl(G),\sch(G))$ of $G$, we define the \emph{superplancherel measure} $\SPl_G$ of $T$ as follow: given $\chi\in\sch(G)$, then $\SPl_G^T(\chi):=\frac{1}{|G|}\frac{\chi(1)^2}{\langle\chi,\chi\rangle}.$
\end{defi}
Notice that if $T$ is the irreducible character theory, then the superplancherel measure is equal to the usual Plancherel measure. We stress out that the definition of superplancherel measure depends on the supercharacter theory but we will omit it if it is clear from the context.
\smallskip

Let us show that $\SPl_G$ is indeed a probability measure; we prove first supercharacter orthogonality relations of first and second kind. Fix a supercharacter theory $T=(\scl(G),\sch(G))$ for $G$, then by \cite[Lemma 2.1]{diaconis2008supercharacters} we know that for every supercharacter $\chi\in \sch(G)$ there exists $c(\chi)\in \C$ such that
\begin{equation}\label{equation supercharacter formula into irr characters}
 c(\chi)\chi=\sum_{\xi\in I(\chi)}\xi(1)\xi.
\end{equation}
\begin{proposition}
 Set $\chi_1,\chi_2\in\sch(G)$, then
 \[\langle \chi_1,\chi_2\rangle=\frac{\chi_1(1)}{c(\chi_1)}\delta_{\chi_1,\chi_2}. \]
\end{proposition}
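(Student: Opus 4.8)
The plan is to substitute the expansion \eqref{equation supercharacter formula into irr characters} of each supercharacter into ordinary irreducible characters and then invoke the first orthogonality relations for $\Irr(G)$. Writing $\chi_i=\tfrac{1}{c(\chi_i)}\sum_{\xi\in I(\chi_i)}\xi(1)\xi$ for $i=1,2$ and using that the Frobenius product is linear in the first slot and conjugate-linear in the second, I would obtain
\[
\langle\chi_1,\chi_2\rangle=\frac{1}{c(\chi_1)\overline{c(\chi_2)}}\sum_{\xi\in I(\chi_1)}\sum_{\eta\in I(\chi_2)}\xi(1)\,\overline{\eta(1)}\,\langle\xi,\eta\rangle.
\]
Since the degrees $\xi(1)$ are positive integers (hence real) and $\langle\xi,\eta\rangle=\delta_{\xi,\eta}$ by the orthogonality relations of the first kind, this double sum collapses to a single sum ranging over $I(\chi_1)\cap I(\chi_2)$.

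The decisive structural input is axiom (3) of Definition \ref{definition supercharacter theory}: each irreducible character is a constituent of exactly one supercharacter, so the sets $\{I(\chi)\}_{\chi\in\sch(G)}$ form a partition of $\Irr(G)$. Consequently, if $\chi_1\neq\chi_2$ then $I(\chi_1)\cap I(\chi_2)=\emptyset$, the single sum is empty, and $\langle\chi_1,\chi_2\rangle=0$; this agrees with the claimed right-hand side because $\delta_{\chi_1,\chi_2}=0$.

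It remains to treat the diagonal case $\chi_1=\chi_2=:\chi$, where the expression reduces to $\tfrac{1}{|c(\chi)|^2}\sum_{\xi\in I(\chi)}\xi(1)^2$. To evaluate the inner sum I would simply evaluate \eqref{equation supercharacter formula into irr characters} at the identity, which gives $c(\chi)\chi(1)=\sum_{\xi\in I(\chi)}\xi(1)^2$, and therefore $\langle\chi,\chi\rangle=\chi(1)/\overline{c(\chi)}$. The only point requiring genuine care — and thus the main obstacle — is the appearance of $\overline{c(\chi)}$ rather than $c(\chi)$. I would dispose of the conjugate by observing that $c(\chi)$ is in fact a positive real number: from $c(\chi)=\bigl(\sum_{\xi\in I(\chi)}\xi(1)^2\bigr)/\chi(1)$ the numerator is a positive integer and $\chi(1)>0$ is the degree of the nonzero character $\chi$, so $c(\chi)\in\R_{>0}$ and $\overline{c(\chi)}=c(\chi)$. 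This yields $\langle\chi,\chi\rangle=\chi(1)/c(\chi)$, completing the identity in both cases.
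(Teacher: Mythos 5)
Your proof is correct and follows essentially the same route as the paper: expand both supercharacters via \eqref{equation supercharacter formula into irr characters}, apply the first orthogonality relations, use axiom (3) of Definition \ref{definition supercharacter theory} to kill the off-diagonal case, and evaluate at the identity for the diagonal case. Your extra step verifying that $c(\chi)$ is a positive real (so that $\overline{c(\chi)}=c(\chi)$) is a point the paper silently glosses over, and it is handled correctly.
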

\begin{proof}
 Consider
 \[c(\chi_1)\chi_1=\sum_{\xi\in I(\chi_1)}\xi(1)\xi,\qquad c(\chi_2)\chi_2=\sum_{\xi\in I(\chi_2)}\xi(1)\xi.\]
 Then
 \[\langle \chi_1,\chi_2\rangle=\frac{1}{c(\chi_1)c(\chi_2)}\sum_{\substack{\xi_1\in I(\chi_1)\\\xi_2\in I(\chi_2)}}\xi_1(1)\xi_2(1)\langle\xi_1,\xi_2\rangle.\]
 By the first orthogonality relations we have that $\langle\xi_1,\xi_2\rangle=\delta_{\xi_1,\xi_2}$; but if $\chi_1\neq\chi_2$ then $I(\chi_1)\cap I(\chi_2)=\emptyset$ by the third property of Definition \ref{definition supercharacter theory}. Hence 
 \[\langle \chi_1,\chi_2\rangle=\frac{1}{c(\chi_1)c(\chi_2)}\sum_{\substack{\xi_1\in I(\chi_1)\\\xi_2\in I(\chi_2)}}\xi_1(1)\xi_2(1)\delta_{\xi_1,\xi_2}=0\]
 if $\chi_1\neq\chi_2$. On the other hand, if $\chi_1=\chi_2$ then
 \[\langle \chi_1,\chi_2\rangle=\frac{1}{c(\chi_1)c(\chi_2)}\sum_{\substack{\xi_1\in I(\chi_1)\\\xi_2\in I(\chi_2)}}\xi_1(1)\xi_2(1)\delta_{\xi_1,\xi_2}=\frac{1}{c(\chi_1)^2}\sum_{\xi_1\in I(\chi_1)}\xi_1(1)^2=\frac{\chi_1(1)}{c(\chi_1)}.\]
Therefore we can conclude that $\langle \chi_1,\chi_2\rangle=\frac{\chi_1(1)}{c(\chi_1)}\delta_{\chi_1,\chi_2}$.
\end{proof}
In the irreducible character theory, a direct consequence of the orthogonality relations of the first kind is the orthogonality relations of the second kind: if $g,h\in G$ then
\begin{equation}\label{orthogonality relations of the second kind}
 \sum_{\xi\in\Irr(G)}\xi(g)\overline{\xi(h)}=\frac{|G|}{|\mathcal{C}_g|}\delta_{\mathcal{C}_g,\mathcal{C}_h},
\end{equation}
where $\mathcal{C}_g,\mathcal{C}_h$ are the conjugacy classes of respectively $g$ and $h$. We adapt the proof of this result to the supercharacter theory, see for example \cite[Theorem 1.10.3]{sagan2013symmetric}
\begin{proposition}
 Let $\K_1,\K_2\in\scl(G)$, then
 \[\sum_{\chi\in\sch(G)}\frac{c(\chi)}{\chi(1)}\chi(\K_1)\overline{\chi(\K_2)}=\frac{|G|}{|\K_1|}\delta_{\K_1,\K_2}.\]
\end{proposition}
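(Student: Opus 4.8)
The plan is to mirror the classical derivation of the second orthogonality relations from the first, whose content is simply that a suitable normalization of the (super)character table is a unitary matrix. The decisive structural input will be the first axiom of Definition \ref{definition supercharacter theory}, namely $|\scl(G)|=|\sch(G)|$, which guarantees that this table is a \emph{square} matrix; for a square matrix a one-sided inverse is automatically two-sided, and it is exactly this passage from row-orthonormality (the first kind) to column-orthonormality (the second kind) that the square shape makes legitimate.

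First I would rewrite the Frobenius product appearing in the previous proposition as a sum over superclasses. Since each supercharacter is constant on superclasses, for $\chi_1,\chi_2\in\sch(G)$ one has $\langle\chi_1,\chi_2\rangle=\frac{1}{|G|}\sum_{\K\in\scl(G)}|\K|\,\chi_1(\K)\overline{\chi_2(\K)}$, so the already proven relation $\langle\chi_1,\chi_2\rangle=\frac{\chi_1(1)}{c(\chi_1)}\delta_{\chi_1,\chi_2}$ becomes a statement about this finite sum. Next I would observe that the scalar $c(\chi)/\chi(1)$ is a positive real number: indeed $\langle\chi,\chi\rangle=\chi(1)/c(\chi)>0$ because it is the squared norm of a nonzero character and $\chi(1)$ is a positive integer, so its reciprocal is positive as well. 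This lets me take genuine real square roots and define the square matrix $U$, indexed by $\sch(G)\times\scl(G)$, with entries $U_{\chi,\K}:=\sqrt{c(\chi)/\chi(1)}\,\sqrt{|\K|/|G|}\,\chi(\K)$.

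With this definition the first orthogonality relations read precisely $(UU^{*})_{\chi_1,\chi_2}=\sqrt{\tfrac{c(\chi_1)}{\chi_1(1)}}\sqrt{\tfrac{c(\chi_2)}{\chi_2(1)}}\,\langle\chi_1,\chi_2\rangle=\delta_{\chi_1,\chi_2}$, i.e. $UU^{*}=I$. Here the square $|\scl(G)|=|\sch(G)|$ shape enters: a square matrix satisfying $UU^{*}=I$ also satisfies $U^{*}U=I$, so its columns are orthonormal. Expanding $(U^{*}U)_{\K_2,\K_1}=\delta_{\K_1,\K_2}$ and pulling out the positive prefactors gives $\frac{\sqrt{|\K_1||\K_2|}}{|G|}\sum_{\chi\in\sch(G)}\frac{c(\chi)}{\chi(1)}\chi(\K_1)\overline{\chi(\K_2)}=\delta_{\K_1,\K_2}$, and rearranging (using $\sqrt{|\K_1||\K_2|}=|\K_1|$ when $\K_1=\K_2$) yields exactly the claimed identity.

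The only point requiring care, and the one I would flag as the real substance of the argument, is the transition $UU^{*}=I\Rightarrow U^{*}U=I$, which is false for rectangular matrices; it is valid here solely because axiom (1) forces $U$ to be square. Everything else is bookkeeping: the rewriting of $\langle\cdot,\cdot\rangle$ as a superclass sum, the verification that $c(\chi)/\chi(1)>0$ so that the normalization is well defined, and the final scalar rearrangement. I would therefore organize the write-up as (i) superclass form of the first relations, (ii) positivity and definition of $U$, (iii) unitarity and the square-matrix inversion, (iv) reading off the second kind.
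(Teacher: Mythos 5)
Your proposal is correct and follows essentially the same route as the paper: both form the modified supercharacter table $U$ with entries $\sqrt{|\K|/|G|}\,\sqrt{c(\chi)/\chi(1)}\,\chi(\K)$, deduce row-orthonormality from the first orthogonality relations, and use the fact that a square matrix with orthonormal rows has orthonormal columns. Your write-up merely makes explicit two points the paper leaves implicit, namely the positivity of $c(\chi)/\chi(1)$ and the role of the axiom $|\scl(G)|=|\sch(G)|$ in justifying the passage from $UU^{*}=I$ to $U^{*}U=I$.
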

\begin{proof}
The modified supercharacter table
 \[U=\left[ \sqrt{\frac{|\K|}{|G|}}\sqrt{\frac{c(\chi)}{\chi(1)}}\chi(\K)\right]_{\chi\in\sch(G),\K\in\scl(G)}\]
 is unitary, that is, it has orthonormal rows, due to the previous proposition. This implies that it has also orthonormal columns, \emph{i.e.},
 \[\sum_{\chi\in\sch(G)}\frac{c(\chi)}{\chi(1)}\frac{\sqrt{|\K_1|}\sqrt{|\K_2|}}{|G|}\chi(\K_1)\overline{\chi(\K_2)}=\delta_{\K_1,\K_2}.\qedhere\]
\end{proof}
\begin{proposition}
 For each group $G$ and supercharacter theory $T$ of $G$ the superplancherel measure $\SPl_G$ is a probability measure.
\end{proposition}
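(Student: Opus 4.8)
The plan is to verify the two defining properties of a probability measure: nonnegativity of each value $\SPl_G(\chi)$, and the fact that the values sum to $1$.

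For nonnegativity, I would first observe that $\langle\chi,\chi\rangle>0$ since $\chi$ is a nonzero character, while $\chi(1)^2\geq 0$ and $|G|>0$; hence $\SPl_G(\chi)=\frac{1}{|G|}\frac{\chi(1)^2}{\langle\chi,\chi\rangle}\geq 0$ for every $\chi\in\sch(G)$.

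The heart of the argument is the normalization, and the key step is to use the previous proposition, which gives $\langle\chi,\chi\rangle=\chi(1)/c(\chi)$, to simplify
\[\SPl_G(\chi)=\frac{1}{|G|}\frac{\chi(1)^2}{\chi(1)/c(\chi)}=\frac{\chi(1)c(\chi)}{|G|}.\]
I would then evaluate the defining identity \eqref{equation supercharacter formula into irr characters}, namely $c(\chi)\chi=\sum_{\xi\in I(\chi)}\xi(1)\xi$, at the identity element to obtain $c(\chi)\chi(1)=\sum_{\xi\in I(\chi)}\xi(1)^2$. As a byproduct this shows $c(\chi)>0$, since $\chi(1)>0$ and the right-hand side is a positive sum, reconfirming that each $\SPl_G(\chi)$ is genuinely positive.

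Summing over all supercharacters and invoking the third axiom of Definition \ref{definition supercharacter theory}, which guarantees that the sets $I(\chi)$ for $\chi\in\sch(G)$ form a partition of $\Irr(G)$, I would obtain
\[\sum_{\chi\in\sch(G)}\SPl_G(\chi)=\frac{1}{|G|}\sum_{\chi\in\sch(G)}\sum_{\xi\in I(\chi)}\xi(1)^2=\frac{1}{|G|}\sum_{\xi\in\Irr(G)}\xi(1)^2=1,\]
where the last equality is the degree formula $|G|=\sum_{\xi\in\Irr(G)}\xi(1)^2$ recalled in the preliminaries. This completes the verification. I do not anticipate any genuine obstacle; the only point requiring care is justifying that the sets $I(\chi)$ partition $\Irr(G)$, which is precisely the content of axiom (3) together with the fact that each $\chi\in\sch(G)$ has $I(\chi)\neq\emptyset$.
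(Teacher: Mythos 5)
Your proposal is correct, but the normalization step follows a genuinely different route from the paper's. After reducing to $\SPl_G(\chi)=\frac{c(\chi)\chi(1)}{|G|}$ (which both arguments do identically, via $\langle\chi,\chi\rangle=\chi(1)/c(\chi)$), the paper invokes the supercharacter orthogonality relations of the second kind, specialized to $\K_1=\K_2=\{1\}$; this requires knowing that $\{1\}$ is itself a superclass (a consequence of \cite[Lemma 2.1]{diaconis2008supercharacters}) and rests on the unitarity of the modified supercharacter table established in the preceding proposition. You instead evaluate the defining identity $c(\chi)\chi=\sum_{\xi\in I(\chi)}\xi(1)\xi$ at the identity element to get $c(\chi)\chi(1)=\sum_{\xi\in I(\chi)}\xi(1)^2$, and then sum over $\sch(G)$ using axiom (3) of Definition \ref{definition supercharacter theory} to see that the sets $I(\chi)$ partition $\Irr(G)$, finishing with the classical degree formula $|G|=\sum_{\xi\in\Irr(G)}\xi(1)^2$. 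Your route is more elementary and self-contained: it bypasses the second-kind orthogonality relations and the fact that $\{1\}$ is a superclass entirely, using only equation \eqref{equation supercharacter formula into irr characters} and the axioms. What the paper's approach buys is an immediate illustration of the orthogonality machinery it has just developed, which is reused elsewhere; what yours buys is a shorter chain of dependencies and an explicit positivity argument ($c(\chi)>0$), which the paper leaves implicit. Both are complete and correct.
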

\begin{proof}
Since 
\[\langle\chi,\chi\rangle=\frac{1}{c(\chi)^2}\sum_{\xi\in I(\chi)}\xi(1)^2=\frac{\chi(1)}{c(\chi)},\]
then $\SPl_G^T(\chi):=\frac{1}{|G|}\frac{\chi(1)^2}{\langle\chi,\chi\rangle}= \frac{c(\chi)}{|G|}\chi(1).$
Another consequence of \cite[Lemma 2.1]{diaconis2008supercharacters} is that $\K=\{1\}$ is always a superclass. In particular the previous proposition applied to $\K_1=\{1\}=\K_2$ gives:
\[\sum_{\chi\in \sch(G)}\frac{c(\chi)}{|G|}\chi(1)=1\]
hence the superplancherel measure is indeed a probability measure.
\end{proof}                                                                                                      
\subsection{Superinduction and transition measure}\label{section superinduction}
In this section we extend the notion of Superinduction, defined by Diaconis and Isaacs in \cite{diaconis2008supercharacters} for algebra groups, to general finite groups, and we use it to define a transition measure. Let $G$ be a finite group, $H\leq G$ a subgroup and $(\scl(G),\sch(G))$ a supercharacter theory for $G$. Let $\phi\colon H\to\C$ be any function, we set $\phi^0\colon G\to\C$ to be $\phi^0(g)=\phi(g)$ if $g\in H$ and $\phi^0(g)=0$ otherwise. We define
\[\SInd_H^G(\phi)(g):=\frac{|G|}{|H|\cdot|[g]|}\sum_{k\in[g]}\phi^0(k),\]
where $[g]\in\scl(G)$ is the superclass containing $g$. By construction, $\SInd_H^G(\phi)$ is a superclass function. Since $\sch(G)$ is an orthogonal basis for the algebra of superclass functions (see \cite[Theorem 2.2]{diaconis2008supercharacters}), we can expand $\SInd_H^G(\phi)$ in this basis:
\[\SInd_H^G(\phi)=\sum_{\chi\in\sch(G)}\frac{\langle \SInd_H^G(\phi),\chi\rangle}{\langle\chi,\chi\rangle}\chi.\]

A supercharacter version of the Frobenius reciprocity holds: if $\psi$ is a superclass function then
\begin{align*}
 \langle \SInd_H^G(\phi),\psi\rangle&=\frac{|G|}{|H|\cdot|[g]|}\frac{1}{|G|}\sum_{g\in G}\sum_{k\in[g]}\overline{\phi^0(k)}\psi(g)\\
 &=\frac{1}{|H|}\sum_{\mathcal{K}\in\scl(G)}\sum_{g,k\in\mathcal{K}}\frac{\overline{\phi^0(k)}\psi(k)}{|\mathcal{K}|}\\
 &=\frac{1}{|H|}\sum_{\mathcal{K}\in\scl(G)}\sum_{k\in\mathcal{K}}\overline{\phi^0(k)}\psi(k)\\
 &=\frac{1}{|H|}\sum_{k\in G}\overline{\phi^0(k)}\psi(k)\\
 &=\frac{1}{|H|}\sum_{k\in H}\overline{\phi(k)}\psi(k)=\langle\phi,\Res_H^G(\psi)\rangle.
\end{align*}
Here $\Res_H^G(\psi)$ is the restriction of $\psi$ to $H$.
\smallskip

Consider now also $H$ endowed with a supercharacter theory $(\scl(H),\sch(H))$. Suppose also that this supercharacter theory is \emph{consistent} with the one of $G$, that is, for each $\mathcal{H}\in\scl(H)$ there exists $\mathcal{K}\in\scl(G)$ such that $\mathcal{H}\subseteq\mathcal{K}$. This is equivalent to the requirement that $\Res_H^G(\chi)$ is a superclass function on $H$ for each $\chi\in\sch(G)$ by \cite[Theorem 2.2]{diaconis2008supercharacters}.
\begin{defi}
 Let $\chi\in\sch(G)$, $\gamma\in\sch(H)$. The \emph{transition measure} $\rho_H^G(\gamma,\chi)$ is defined as
 \[\rho_H^G(\gamma,\chi):=\frac{|H|}{|G|}\frac{\chi(1)}{\gamma(1)}\frac{\langle \SInd_H^G(\gamma),\chi\rangle}{\langle\chi,\chi\rangle}.\] 
\end{defi}
\begin{proposition}
 The following hold:
 \begin{enumerate}
  \item  For each $\chi\in\sch(G)$ we have $\sum_{\gamma\in\sch(H)}\rho_H^G(\gamma,\chi)\SPl_H(\gamma)=\SPl_G(\chi)$.
  \item For each $\gamma\in\sch(H)$ we have $\sum_{\chi\in\sch(G)}\rho_H^G(\gamma,\chi)=1$.
 \end{enumerate} 
In particular, let $\{1\}=G_0\hookrightarrow\ldots\hookrightarrow G_n\hookrightarrow G_{n+1}\hookrightarrow\ldots$ be a tower of groups, and suppose that for each $n$ we associate a supercharacter theory $T_n$ to $G_n$ which is consistent with $T_{n+1}$. Let $\chi_1$ be the unique supercharacter for $G_0=\{1\}$; consider the Markov process with initial state $\chi_1$ and transition measures $\rho_{U_n}^{U_{n+1}}$. Then this process has marginals distributed as $\SPl_{G_n}$.
 \end{proposition}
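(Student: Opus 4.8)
\emph{Proof plan.} The strategy is to verify the two displayed identities by direct substitution of the definitions, with the supercharacter Frobenius reciprocity proved above as the one nontrivial ingredient, and then to deduce the statement about marginals by a short induction.

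For part (1), the plan is to substitute the definitions of $\rho_H^G(\gamma,\chi)$ and of $\SPl_H(\gamma)=\frac{1}{|H|}\frac{\gamma(1)^2}{\langle\gamma,\gamma\rangle}$ into the left-hand side. The factors $|H|$ cancel and one factor $\gamma(1)$ survives, giving
\[\sum_{\gamma\in\sch(H)}\rho_H^G(\gamma,\chi)\SPl_H(\gamma)=\frac{\chi(1)}{|G|\langle\chi,\chi\rangle}\sum_{\gamma\in\sch(H)}\frac{\gamma(1)}{\langle\gamma,\gamma\rangle}\langle\SInd_H^G(\gamma),\chi\rangle.\]
The key move is then to rewrite $\langle\SInd_H^G(\gamma),\chi\rangle=\langle\gamma,\Res_H^G(\chi)\rangle$ by the supercharacter Frobenius reciprocity. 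Since the supercharacter theory of $H$ is consistent with that of $G$, the function $\Res_H^G(\chi)$ is a superclass function on $H$, hence expands in the orthogonal basis $\sch(H)$; evaluating that expansion at the identity yields $\chi(1)=\sum_{\gamma}\frac{\gamma(1)}{\langle\gamma,\gamma\rangle}\langle\gamma,\Res_H^G(\chi)\rangle$, where the conjugation implicit in the scalar product is harmless because $\chi(1)$ is a positive integer. Substituting collapses the $\gamma$-sum and leaves exactly $\frac{\chi(1)^2}{|G|\langle\chi,\chi\rangle}=\SPl_G(\chi)$.

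For part (2), I would again unfold $\rho_H^G(\gamma,\chi)$ and pull the constants out of the sum over $\chi$, reducing the claim to evaluating $\sum_{\chi\in\sch(G)}\frac{\chi(1)}{\langle\chi,\chi\rangle}\langle\SInd_H^G(\gamma),\chi\rangle$. This sum is precisely the value at the identity of the expansion of the superclass function $\SInd_H^G(\gamma)$ in the basis $\sch(G)$, hence equals $\SInd_H^G(\gamma)(1)$. Because $\{1\}$ is always a superclass, so that $|[1]|=1$, the defining formula for $\SInd$ gives $\SInd_H^G(\gamma)(1)=\frac{|G|}{|H|}\gamma(1)$; the leading factor $\frac{|H|}{|G|\gamma(1)}$ then cancels this to $1$.

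Finally, part (3) follows by induction on $n$. The base case is immediate, since $\SPl_{G_0}$ is the unit mass at $\chi_1$. For the inductive step, if the marginal at level $n$ is $\SPl_{G_n}$, then the law at level $n+1$ assigns to $\chi$ the weight $\sum_{\gamma}\SPl_{G_n}(\gamma)\,\rho_{G_n}^{G_{n+1}}(\gamma,\chi)$, which equals $\SPl_{G_{n+1}}(\chi)$ by part (1), while part (2) guarantees that the $\rho_{G_n}^{G_{n+1}}$ are genuine transition kernels with each row summing to $1$. I expect the computations themselves to be routine once reciprocity and the ``$\{1\}$ is a superclass'' fact are in hand; the steps requiring the most care are conceptual rather than computational, namely the nonnegativity of the transition weights needed to call $\rho$ an honest Markov kernel, and the systematic use of consistency, which is exactly what makes $\Res_H^G(\chi)$ a superclass function and legitimizes the expansion invoked in part (1).
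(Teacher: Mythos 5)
Your proposal is correct and follows essentially the same route as the paper: both parts are proved by unfolding the definitions, invoking the supercharacter Frobenius reciprocity $\langle \SInd_H^G(\gamma),\chi\rangle=\langle\gamma,\Res_H^G(\chi)\rangle$, and evaluating the expansion of the relevant superclass function ($\Res_H^G(\chi)$ in part (1), $\SInd_H^G(\gamma)$ in part (2)) at the identity, with the Markov statement following by the same induction. Your extra remarks on the conjugation in the scalar product and on nonnegativity of the kernel are reasonable caveats but do not change the argument, which matches the paper's.
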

\begin{proof}
 \begin{enumerate}
  \item Set $\chi\in\sch(G)$, then
  \begin{align*}
   \sum_{\gamma\in\sch(H)}\rho_H^G(\gamma,\chi)\SPl_H(\gamma)&=\sum_{\gamma\in\sch(H)}\frac{|H|}{|G|}\frac{\chi(1)}{\gamma(1)}\frac{\langle \SInd_H^G(\gamma),\chi\rangle}{\langle\chi,\chi\rangle}\frac{1}{|H|}\frac{\gamma(1)^2}{\langle\gamma,\gamma\rangle}\\
   &=\frac{\chi(1)}{|G|\langle\chi,\chi\rangle}\sum_{\gamma\in\sch(H)}\frac{\langle \gamma,\Res_H^G(\chi)\rangle}{\langle\gamma,\gamma\rangle}\gamma(1)\\
   &=\frac{\chi(1)}{|G|\langle\chi,\chi\rangle}\Res_H^G(\chi)(1)\\
   &=\frac{\chi(1)^2}{|G|\langle\chi,\chi\rangle}=\SPl_G(\chi).
  \end{align*}
  Notice that in the third equality we used the fact that $\Res_H^G(\chi)$ is a superclass function, and thus can be written as $\Res_H^G(\chi)=\sum_{\gamma}\frac{\langle \gamma,\Res_H^G(\chi)\rangle}{\langle\gamma,\gamma\rangle}\gamma$.
\item Set $\gamma\in\sch(H)$, then 
\[\sum_{\chi\in\sch(G)}\rho_H^G(\gamma,\chi)=\sum_{\chi\in\sch(G)}\frac{|H|}{|G|}\frac{1}{\gamma(1)}\frac{\langle \SInd_H^G(\gamma),\chi\rangle}{\langle\chi,\chi\rangle}\chi(1)=\frac{|H|}{|G|}\frac{1}{\gamma(1)}\SInd(\gamma)(1)=1.\qedhere\]  
 \end{enumerate} 
\end{proof}
\section{Supercharacter theory for unitriangular matrices}\label{section supercharacter theory}
\subsection{Set partitions}
We recall some basic definitions regarding set partitions. Let $n\in\N$ and set $[n]$ to be the set $\{1,\ldots,n\}$. A \emph{set partition} $\pi$ of $[n]$ is a family of non empty sets, called \emph{blocks}, which are disjoint and whose union is $[n]$. If $\pi$ is a set partition of $[n]$ we write $\pi\vdash[n]$. Conventionally the blocks of $\pi$ are ordered by increasing value of the smallest element of the block, and inside every block the elements are ordered with the usual order of natural numbers. If two numbers $i$ and $j$ are in the same block of the set partition $\pi\vdash[n]$ and there is no $k$ in that block such that $i<k<j$, then the pair $(i,j)$ is said to be an \emph{arc} of $\pi$. The set partition $\pi$ is uniquely determined by the set $D(\pi)$ of arcs. The \emph{standard representation} of $\pi\vdash[n]$ is the graph with vertex set $[n]$ and edge set $D(\pi)$, as in Figure \ref{picture standard representation}.
\begin{figure}[H]
 \[ \pi= \begin{array}{c}
 \begin{tikzpicture}[scale=0.5]
\draw [fill] (0,0) circle [radius=0.05];
\node [below] at (0,0) {1};
\draw [fill] (1,0) circle [radius=0.05];
\node [below] at (1,0) {2};
\draw [fill] (2,0) circle [radius=0.05];
\node [below] at (2,0) {3};
\draw [fill] (3,0) circle [radius=0.05];
\node [below] at (3,0) {4};
\draw [fill] (4,0) circle [radius=0.05];
\node [below] at (4,0) {5};
\draw [fill] (5,0) circle [radius=0.05];
\node [below] at (5,0) {6};
\draw [fill] (6,0) circle [radius=0.05];
\node [below] at (6,0) {7};
\draw [fill] (7,0) circle [radius=0.05];
\node [below] at (7,0) {8};
\draw [fill] (8,0) circle [radius=0.05];
\node [below] at (8,0) {9};
\draw (0,0) to[out=70, in=110] (4,0);
\draw (4,0) to[out=70, in=110] (6,0);
\draw (2,0) to[out=70, in=110] (3,0);
\draw (3,0) to[out=70, in=110] (8,0);
\draw (5,0) to[out=70, in=110] (7,0);
\end{tikzpicture}\end{array} \]
\caption{Example of the partition $\pi=\left\{\{1,5,7\},\{2\},\{3,4,9\},\{6,8\}\right\}\vdash[9]$ in standard representation.}\label{picture standard representation}
\end{figure}
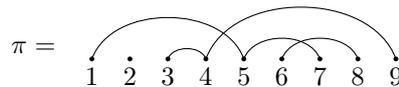
Fix $\pi\vdash[n]$, then define
\begin{itemize}
 \item the \emph{dimension} $\dim(\pi)$ as 
 \[\dim(\pi):=\sum_{(i,j)\in D(\pi)}j-i.\]
 For example, in the set partition of Figure \ref{picture standard representation}, the dimension is $\dim(\pi)=14$.
 \item The number of crossings $\crs(\pi)$ of $\pi$, where a \emph{crossing} is an unordered pair of arcs $\{(i,j),(k,l)\}$ in $D(\pi)$ such that $i<k<j<l$. Diagrammatically a crossing corresponds to the picture 
 \begin{figure}[H]
 \[ \begin{array}{c}
 \begin{tikzpicture}[scale=0.5]
\draw [fill] (0,0) circle [radius=0.05];
\node [below] at (0,0) {$i$};
\draw [fill] (1,0) circle [radius=0.05];
\node [below] at (1,0) {$k$};
\draw [fill] (2,0) circle [radius=0.05];
\node [below] at (2,0) {$j$};
\draw [fill] (3,0) circle [radius=0.05];
\node [below] at (3,0) {$l$};
\draw (0,0) to[out=70, in=110] (2,0);
\draw (1,0) to[out=70, in=110] (3,0);
\end{tikzpicture}\end{array} \]
\end{figure}
 In the example of Figure \ref{picture standard representation}, the number of crossings of $\pi$ is $\crs(\pi)=2$. 
 \item The number of nestings $\nst(\pi)$, where a \emph{nesting} is an unordered pair of arcs $\{(i,j),(k,l)\}\subseteq D(\pi)$ such that $i<k<l<j$. Diagrammatically a crossing corresponds to the picture 
 \begin{figure}[H]
 \[ \begin{array}{c}
 \begin{tikzpicture}[scale=0.5]
\draw [fill] (0,0) circle [radius=0.05];
\node [below] at (0,0) {i};
\draw [fill] (1,0) circle [radius=0.05];
\node [below] at (1,0) {$k$};
\draw [fill] (2,0) circle [radius=0.05];
\node [below] at (2,0) {$l$};
\draw [fill] (3,0) circle [radius=0.05];
\node [below] at (3,0) {$j$};
\draw (0,0) to[out=70, in=110] (3,0);
\draw (1,0) to[out=70, in=110] (2,0);
\end{tikzpicture}\end{array} \]
\end{figure}
 In the example of Figure \ref{picture standard representation}, the number of nestings of $\pi$ is $\nst(\pi)=3$. 
\end{itemize}
Fix $\pi\vdash[n]$ and $i,j$ with $i<j\leq n$. The pair $(i,j)$ is said to be $\pi$-regular if there exists no $k<i$ such that $(k,j)\in D(\pi)$ and there exists no $l>j$ such that $(i,l)\in D(\pi)$. The set of $\pi$-regular pairs is denoted $\reg(\pi)$. For example, given $\pi=\{\{1,4\},\{2,3,5\}\}=\begin{array}{c}\begin{tikzpicture}[scale=0.4]
\draw [fill] (0,0) circle [radius=0.05];\draw [fill] (1,0) circle [radius=0.05];\draw [fill] (2,0) circle [radius=0.05];\draw [fill] (3,0) circle [radius=0.05];\draw [fill] (4,0) circle [radius=0.05];
%\node [below] at (0,0) {1};
%\node [below] at (1,0) {2};
%\node [below] at (2,0) {3};
%\node [below] at (3,0) {4};
%\node [below] at (4,0) {5};
\draw (0,0) to[out=70, in=110] (3,0);
\draw (1,0) to[out=70, in=110] (2,0);
\draw (2,0) to[out=70, in=110] (4,0);
\end{tikzpicture}\end{array}$ then the set $\reg(\pi)$ is $\{(1,4),(1,5),(2,3),(2,5),(3,5)\}$; if an arc is not regular then it is called \emph{singular} and the set of $\pi$-singular arcs is denoted $\Sing(\pi)$. In the previous example thus $\Sing(\pi)=\{(1,2),(1,3),(2,4),(3,4),(4,5)\}$.
\smallskip

If $\pi\vdash[n]$ and $k<l\leq n$ then $\nst_{\pi}(k,l)=\sharp\{(i,j)\in D(\pi)\mbox{ s.t. }i<k<l<j\}$. If $\sigma\vdash[m]$ with $m\leq n$ then 
\[\nst_{\pi}(\sigma):=\sum_{(k,l)\in D(\sigma)}\nst_{\pi}(k,l).\]

\subsection{A supercharacter theory for \texorpdfstring{$U_n$}{Un}}
Let $\mathbb{K}$ be the finite field of order $q$ and characteristic $p$. The group $U_n=U_n(\mathbb{K})$ is the group of upper unitriangular matrices of size $n\times n$ and entries belonging to $\mathbb{K}$, that is,
\[U_n=U_n(\mathbb{K})=\left\{\left[\begin{array}{cccc}
1&a_{1,2}&\cdots&a_{1,n}\\&1&a_{2,3}&\vdots\\&&\ddots&a_{n-1,n}\\&&&1\\
\end{array}\right]\in M_{n\times n}(\mathbb{K})\right\}.\]
In \cite{bergeron2013supercharacter}, Bergeron and Thiem describe a supercharacter theory in which both $\sch(U_n)$ and $\scl(U_n)$ are in bijection with sets partitions of $[n]=\{1,\ldots,n\}$. Through the section, given set partitions $\pi,\sigma\vdash[n]$ we will write $\chi^{\pi}$ for the supercharacter corresponding to $\pi$ and $\K_{\sigma}$ for the superclass corresponding to $\sigma$.
\smallskip

This supercharacter theory has an explicit formula for the supercharacter values:
\begin{proposition}
Let $\pi,\sigma\vdash[n]$, then 
\[\chi^{\pi}(\K_{\sigma})=\left\{\begin{array}{lr}
q^{\dim(\pi)-d(\pi)-\nst_{\pi}(\sigma)}\cdot(q-1)^{d(\pi)}\cdot (-\frac{1}{q-1})^{|D(\pi)\cap D(\sigma)|}&\mbox{if }D(\sigma)\subseteq\reg\pi;\\
0&\mbox{otherwise.}
\end{array}\right.\]
In particular, $\chi^{\pi}(1)=(q-1)^{d(\pi)}\cdot q^{\dim(\pi)-d(\pi)}$.
\end{proposition}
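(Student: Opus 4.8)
The plan is to realise the integer-valued supercharacter $\chi^{\pi}$ as a sum, over all $\mathbb{K}^{*}$-labellings of its arcs, of the Andr\'e--Yan supercharacters of $U_n$, and then to reduce the claimed formula to a single Gauss-type sum over $\mathbb{K}^{*}$.

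First I would set up the labelled picture. Writing $\mathfrak{n}=\mathfrak{n}_n$ for the nilpotent algebra of strictly upper triangular matrices, we have $U_n=1+\mathfrak{n}$, and $U_n\times U_n$ acts on $\mathfrak{n}^{*}\cong\mathfrak{n}$ by the two-sided (row/column) action. Fixing a nontrivial additive character $\theta\colon\mathbb{K}\to\C^{*}$, each two-sided orbit on $\mathfrak{n}^{*}$ carries a Diaconis--Isaacs supercharacter, and these orbits are parametrised by $\mathbb{K}^{*}$-\emph{labelled} set partitions: to $\pi\vdash[n]$ together with a labelling $\phi\colon D(\pi)\to\mathbb{K}^{*}$ one associates the functional $\lambda_{\pi,\phi}$ with $\lambda_{\pi,\phi}(e_{ij})=\phi(i,j)$ on arcs and $0$ elsewhere. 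The Bergeron--Thiem supercharacter is then the coarsening $\chi^{\pi}=\sum_{\phi\colon D(\pi)\to\mathbb{K}^{*}}\chi_{\lambda_{\pi,\phi}}$, and dually $\K_{\sigma}=\bigcup_{\psi}\K_{\sigma,\psi}$ is the union of the Andr\'e superclasses over labellings $\psi\colon D(\sigma)\to\mathbb{K}^{*}$. Since each Andr\'e supercharacter has degree $q^{\dim(\pi)-d(\pi)}$ independently of $\phi$, summing over the $(q-1)^{d(\pi)}$ labellings already yields the ``in particular'' claim $\chi^{\pi}(1)=(q-1)^{d(\pi)}q^{\dim(\pi)-d(\pi)}$, a useful consistency check.

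The heart of the argument is the value of a \emph{single} Andr\'e supercharacter on a superclass representative. Choosing the element $g_{\sigma,\psi}=1+\sum_{(k,l)\in D(\sigma)}\psi(k,l)e_{kl}$ of $\K_{\sigma}$, one has
\[
\chi_{\lambda_{\pi,\phi}}(g_{\sigma,\psi})=
\begin{cases}
q^{\dim(\pi)-d(\pi)-\nst_{\pi}(\sigma)}\displaystyle\prod_{(i,j)\in D(\pi)\cap D(\sigma)}\theta\bigl(\phi(i,j)\psi(i,j)\bigr) & \text{if }D(\sigma)\subseteq\reg(\pi),\\[2mm]
0 & \text{otherwise.}
\end{cases}
\]
I would obtain this from the orbit sum $\chi_{\lambda}(g)=\frac{|U_n\lambda|}{|U_n\lambda U_n|}\sum_{\mu\in U_n\lambda U_n}\theta(\mu(g-1))$: the condition $D(\sigma)\subseteq\reg(\pi)$ is exactly when the inner sum does not collapse to $0$, the prefactor together with the size of the orbit produces the power $q^{\dim(\pi)-d(\pi)-\nst_{\pi}(\sigma)}$, and the only surviving $\theta$-contributions come from arcs common to $\pi$ and $\sigma$. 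This is Andr\'e's character formula, and I expect establishing it — the explicit description of the two-sided orbit $U_n\lambda U_n$, the bookkeeping of its size in terms of nestings, and the vanishing dichotomy governed by regularity — to be the main obstacle; in the present paper it is legitimate to cite it from Andr\'e/Yan and Bergeron--Thiem rather than re-derive it.

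Granting the single-character formula, the proposition follows by summing over $\phi$. When $D(\sigma)\not\subseteq\reg(\pi)$ every term vanishes, giving $0$. When $D(\sigma)\subseteq\reg(\pi)$ the factor $q^{\dim(\pi)-d(\pi)-\nst_{\pi}(\sigma)}$ is independent of $\phi$, so it remains to evaluate $\sum_{\phi}\prod_{(i,j)\in D(\pi)\cap D(\sigma)}\theta(\phi(i,j)\psi(i,j))$. The labels on arcs outside $D(\pi)\cap D(\sigma)$ are free and contribute a factor $q-1$ each, while for each shared arc $(i,j)$, as $\phi(i,j)$ ranges over $\mathbb{K}^{*}$ so does $\phi(i,j)\psi(i,j)$, so that by the elementary identity $\sum_{a\in\mathbb{K}^{*}}\theta(a)=-1$ (a consequence of $\sum_{a\in\mathbb{K}}\theta(a)=0$) each shared arc contributes $-1$. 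Hence the sum equals $(q-1)^{d(\pi)-|D(\pi)\cap D(\sigma)|}(-1)^{|D(\pi)\cap D(\sigma)|}=(q-1)^{d(\pi)}\bigl(-\tfrac{1}{q-1}\bigr)^{|D(\pi)\cap D(\sigma)|}$, which is the asserted formula. This computation is visibly independent of the chosen labelling $\psi$, which simultaneously confirms that $\chi^{\pi}$ is indeed constant on $\K_{\sigma}$ and, on taking $\sigma$ with no arcs, recovers the degree formula.
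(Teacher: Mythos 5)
The paper does not prove this proposition at all: it is quoted as a known fact from Bergeron and Thiem \cite{bergeron2013supercharacter}, so there is no in-paper argument to compare against. Your reconstruction is correct and is essentially the derivation in that reference: write $\chi^{\pi}=\sum_{\phi}\chi_{\lambda_{\pi,\phi}}$ as the torus-orbit sum of Andr\'e--Yan supercharacters, invoke Andr\'e's single-character formula, and collapse the sum over labellings via $\sum_{a\in\mathbb{K}^{*}}\theta(a)=-1$. I checked the two points where such an argument could silently go wrong and both are fine: the vanishing condition in Andr\'e's formula (no $(i,j)$ or $(j,l)$ in $D(\sigma)$ when $(i,l)\in D(\pi)$ and $i<j<l$) is exactly $D(\sigma)\subseteq\reg(\pi)$ as defined in the paper, and the exponent $\sum_{(i,l)\in D(\pi)}\bigl(l-i-1-\sharp\{(j,k)\in D(\sigma):i<j<k<l\}\bigr)$ is exactly $\dim(\pi)-d(\pi)-\nst_{\pi}(\sigma)$. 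The only non-self-contained step is the cited Andr\'e character formula together with the fact that $\K_{\sigma}=\bigcup_{\psi}\K_{\sigma,\psi}$ in the coarser theory; you flag this honestly, and given that the paper itself only cites the result, relying on that input is entirely appropriate. Your closing observation that the answer is independent of $\psi$ is a nice sanity check that the coarser pair of partitions really does form a supercharacter theory.
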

In \cite{baker2014antipode}, the authors describe $c(\pi)$ as
\[c(\pi)=\frac{q^{\crs(\pi)}(q-1)^{d(\pi)}}{\chi^{\pi}(1)},\]
so that
\[\langle \chi^{\pi},\chi^{\pi}\rangle=(q-1)^{d(\pi)}q^{\crs(\pi)}.\]
\begin{corollary}
Set $\pi\vdash[n]$, then 
\[\SPl_n(\chi^{\pi}):=\SPl_{U_n}(\chi^{\pi})=\frac{1}{q^{\frac{n(n-1)}{2}}}\frac{(q-1)^{d(\pi)}\cdot q^{2\dim(\pi)-2 d(\pi)}}{q^{\crs(\pi)}}.\]
\end{corollary}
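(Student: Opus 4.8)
The plan is to apply Definition \ref{definition of superplancherel} directly, since every quantity entering the formula for $\SPl_{U_n}(\chi^{\pi})$ has already been supplied. By definition the superplancherel measure is
\[
\SPl_{U_n}(\chi^{\pi}) = \frac{1}{|U_n|}\frac{\chi^{\pi}(1)^2}{\langle\chi^{\pi},\chi^{\pi}\rangle},
\]
so the task reduces to computing the three ingredients $|U_n|$, $\chi^{\pi}(1)$ and $\langle\chi^{\pi},\chi^{\pi}\rangle$ and then simplifying.

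First I would record that $|U_n| = q^{n(n-1)/2}$: an upper unitriangular matrix in $U_n(\mathbb{K})$ is determined by its $\binom{n}{2} = n(n-1)/2$ entries strictly above the diagonal, each ranging freely over $\mathbb{K}$, a field with $q$ elements. Next I would substitute the degree formula $\chi^{\pi}(1) = (q-1)^{d(\pi)}\, q^{\dim(\pi)-d(\pi)}$ from the preceding Proposition, so that $\chi^{\pi}(1)^2 = (q-1)^{2d(\pi)}\, q^{2\dim(\pi)-2d(\pi)}$. Finally I would insert the norm $\langle\chi^{\pi},\chi^{\pi}\rangle = (q-1)^{d(\pi)}\, q^{\crs(\pi)}$ obtained just above from the description of $c(\pi)$ in \cite{baker2014antipode}.

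Putting these together and cancelling one factor of $(q-1)^{d(\pi)}$ between numerator and denominator yields exactly the claimed expression
\[
\SPl_n(\chi^{\pi}) = \frac{1}{q^{n(n-1)/2}}\,\frac{(q-1)^{d(\pi)}\, q^{2\dim(\pi)-2d(\pi)}}{q^{\crs(\pi)}}.
\]
There is no genuine obstacle here: the corollary is a one-line algebraic consequence of the definition once the degree and the self-inner product of $\chi^{\pi}$ are known, so the only care needed lies in the bookkeeping of the exponents of $q$ and of $q-1$ and in confirming the order of $U_n$. All of the representation-theoretic content has already been absorbed into the Proposition giving $\chi^{\pi}(1)$ and into the cited computation of $c(\pi)$ that fixes $\langle\chi^{\pi},\chi^{\pi}\rangle$.
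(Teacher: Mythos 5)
Your proposal is correct and follows exactly the same route as the paper, which simply invokes Definition \ref{definition of superplancherel} together with $|U_n|=q^{n(n-1)/2}$ and the previously stated values of $\chi^{\pi}(1)$ and $\langle\chi^{\pi},\chi^{\pi}\rangle$. Your version just spells out the substitution and the cancellation of one factor of $(q-1)^{d(\pi)}$ that the paper leaves implicit.
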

\begin{proof}
This is a direct consequence of Definition \ref{definition of superplancherel}, since $|U_n|=q^{n(n-1)}/2$.
\end{proof}
Notice that this supercharacter theory is consistent, so the theory of Section \ref{section superinduction} applies.
\subsection{A combinatorial interpretation of the superplancherel measure}\label{section combinatorial interpretation}
We associate to $\pi\vdash[n]$ the following set $\mathcal{J}_{\pi}\subseteq U_n$: a matrix $A$ belongs to $\mathcal{J}_{\pi}$ if and only if
\begin{itemize}
 \item if $(i,j)\in D(\pi)$ then $A_{i,j}\in \mathbb{K}\setminus\{0\}$;
 \item if $(i,j)\in \reg(\pi)\setminus D(\pi)$ then $A_{i,j}=0$;
 \item if $(i,j)\in \Sing(\pi)$ then $A_{i,j}\in \mathbb{K}$.
\end{itemize}

 \begin{figure}[H]\begin{center}
\[\mathcal{J}_{\pi}=\left[\begin{array}{ccccc}
       1&\bullet&\bullet&\ast&0\\
       0&1&\ast&\bullet&0\\
       0&0&1&\bullet&\ast\\
       0&0&0&1&\bullet\\
       0&0&0&0&1
      \end{array}\right]\]
\end{center}
 \caption{Example of $\mathcal{J}_{\pi}$, where  $\pi=\{\{1,4\},\{2,3,5\}\}$. Here $\ast$ means that in that position there is an element of $\mathbb{K}^{\times}$, and $\bullet$ is an element of $\mathbb{K}.$}\label{example of J_pi}
\end{figure}
We say that a matrix $A$ in $\mathcal{J}_{\pi}$ is \emph{canonical} if
\[A_{i,j}=\left\{\begin{array}{lcl}1&\mbox{if}&(i,j)\in D(\pi)\mbox{ or }i=j\\0&&\mbox{otherwise}\end{array}\right.\]
In this section we show that the sets $\mathcal{J}_{\pi}$ partition of the group $U_n$ and that given a matrix $A\in U_n$ the set partition $\pi$ such that $A\in\mathcal{J}_{\pi}$ can be computed efficiently. We stress out that $\mathcal{J}_{\pi}$ are not the superclasses for this supercharacter theory, and in general they are not even union of conjugacy classes.

The following algorithm takes as input a matrix $A\in U_n$ and gives as output a canonical matrix $\tilde{A}\in\mathcal{J}_{\pi}$ for some $\pi$. The algorithm will consists of $n$ steps, at the step $k$ we will consider the $k$-th diagonal $d_k$ of $A^{k-1}$ starting from the upper-right corner, where 
\[d_k(A)=\{A_{1,n-k+1},A_{2,n-k+2},\ldots, A_{k,n}\}.\]

\begin{description}\label{lemma combinatorial algorithm}
\item STEP $0$: set $A^0=A.$
 \item STEP $1$: if $A^0_{1,n}=0$ set $A^1=A^0$;\\
  if $A^0_{1,n}\neq 0$ set $A^1_{1,n}=1$ and all other entries in the same row on the left and on the same column below $A^1_{1,n}$, up to the diagonal, equal to $0$. Set $A^1_{i,j}=A^0_{i,j}$ for all other entries $(i,j)$.
  \item STEP $k$: for all $A^{k-1}_{i,n-k+i}\in d_k(A^{k-1})$ do the following: if $A^{k-1}_{i,n-k+i}=0$ set $A^k_{i,j}=A^{k-1}_{i,j}$ for each $j=1,\ldots, n$; if $A^{k-1}_{i,n-k+i}\neq 0$ set $A^k_{i,n-k+i}=1$ and all other entries in the same row on the left and on the same column below $A^k_{i,n-k+i}$, up to the diagonal, equal to $0$. All other entries of $A^k$ are equal to the ones of $A^{k-1}$. 
\end{description}
For an example of the algorithm see Figure \ref{figure matrices}.

\begin{lemma}
 Given a matrix $A\in U_n$, there exists a unique $\pi$ such that $A\in \mathcal{J}_{\pi}$. In other words, $U_n=\bigsqcup\limits_{\pi\vdash[n]}\mathcal{J}_{\pi}$.
\end{lemma}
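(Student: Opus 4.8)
The plan is to establish existence and uniqueness simultaneously by determining the arc set $D(\pi)$ directly from the entries of $A$, scanning the off-diagonal positions $(i,j)$ in order of \emph{decreasing span} $j-i$ (the order in which the stated algorithm visits its diagonals $d_k$). The entire argument rests on one structural observation: whether a pair $(i,j)$ is $\pi$-regular depends \emph{only} on the arcs of $\pi$ of strictly larger span, since by definition singularity of $(i,j)$ requires an arc $(k,j)$ with $k<i$ (span $j-k>j-i$) or an arc $(i,l)$ with $l>j$ (span $l-i>j-i$). Moreover, an arc is always $\pi$-regular: two arcs into the same $j$, or two arcs out of the same $i$, cannot both occur, as each vertex of a set partition has at most one incoming and one outgoing arc. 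These two facts together make the following determination non-circular.

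First I would prove uniqueness. Suppose $A\in\mathcal{J}_\pi$ and process the positions by decreasing span. When we reach $(i,j)$, all arcs of larger span are already fixed, so by the observation above the regularity of $(i,j)$ is already decided. If $(i,j)$ is singular then it cannot be an arc, so $(i,j)\notin D(\pi)$ is forced. If $(i,j)$ is regular, then the defining conditions of $\mathcal{J}_\pi$ require $A_{i,j}\neq 0$ when $(i,j)\in D(\pi)$ and $A_{i,j}=0$ when $(i,j)\in\reg(\pi)\setminus D(\pi)$; hence $(i,j)\in D(\pi)$ if and only if $A_{i,j}\neq 0$, again forced by $A$. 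Since the membership of every position in $D(\pi)$ is dictated by $A$, at most one $\pi$ can satisfy $A\in\mathcal{J}_\pi$.

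For existence I would turn this determination into a construction: scanning by decreasing span, declare $(i,j)$ an arc exactly when it is regular with respect to the already-declared arcs and $A_{i,j}\neq 0$. It then remains to check that the resulting edge set $D$ is the arc set of a genuine set partition and that $A\in\mathcal{J}_\pi$ for the corresponding $\pi$. The degree constraints hold automatically: if $i$ already carried a longer arc $(i,l)$ then $(i,j)$ would be singular and never declared, so no vertex acquires two outgoing arcs, and symmetrically none acquires two incoming arcs; thus $D$ is a disjoint union of increasing paths whose vertex sets are the blocks of a set partition $\pi$ with $D(\pi)=D$. Membership $A\in\mathcal{J}_\pi$ follows position by position: declared arcs carry nonzero entries by the rule, regular non-arcs carry zero (regular but undeclared forces $A_{i,j}=0$), and singular positions are unconstrained. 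Finally I would reconcile this with the stated algorithm by observing that each clearing step only zeroes positions that become singular for the emerging partition---entries left of a pivot in its row or below it in its column---and, conversely, every eventually-singular position is cleared by its defining longer arc before it is visited; hence at a visited position the algorithm sees exactly $A_{i,j}$ when $(i,j)$ is regular and sees $0$ when it is singular, so its pivots coincide precisely with the arcs of $\pi$.

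The main obstacle is the first paragraph: isolating that both ``$\pi$-regular'' and ``is an arc'' refer only to strictly longer arcs, which is exactly what lets a single decreasing-span pass resolve every position without circularity. Once that is in place, the degree-constraint verification and the matching with the algorithm's clearing operations are routine but must be carried out with care. (As a consistency check one may also verify $\sum_{\pi\vdash[n]}|\mathcal{J}_\pi|=(q-1)^{d(\pi)}q^{|\Sing(\pi)|}$ summed to $q^{n(n-1)/2}=|U_n|$, which combined with disjointness gives an independent confirmation of the covering.)
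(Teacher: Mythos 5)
Your proof is correct, and it is organised differently from the paper's. The paper argues through the reduction algorithm: it observes that rescaling the nonzero entry at an arc position, and zeroing the singular positions to its left in its row and below it in its column, both preserve membership in $\mathcal{J}_{\pi}$; hence $A^{k-1}\in\mathcal{J}_{\pi}$ if and only if $A^{k}\in\mathcal{J}_{\pi}$, and the canonical output of the algorithm then pins down $\pi$. You bypass the clearing operations entirely and show that $D(\pi)$ is forced position by position in a single decreasing-span pass, resting on the two facts you isolate at the outset: regularity of $(i,j)$ depends only on arcs of strictly larger span, and an arc is always regular. Your version makes explicit what the paper leaves implicit --- in particular the verification that the greedily declared edge set has in- and out-degree at most one at every vertex and is therefore the arc set of a genuine set partition, a point the paper glosses over when it asserts that the algorithm's output is canonical for \emph{some} $\pi$ (and likewise the fact, relegated to a figure caption, that a surviving nonzero entry on the current diagonal must be an arc rather than a singular position). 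The paper's version, in exchange, simultaneously certifies the correctness of the sampling procedure used to generate the random set partitions pictured in the introduction, which for you is an optional afterthought. One cosmetic slip: your closing consistency check should read $\sum_{\pi\vdash[n]}(q-1)^{d(\pi)}q^{|\Sing(\pi)|}=q^{n(n-1)/2}$; as written the identity has a free $\pi$ on the right-hand side.
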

\begin{proof}
We start the proof with the following two observations:
\begin{itemize}
 \item consider $A\in\mathcal{J}_{\pi}$ and $(i,j)\in D(\pi)$, so that $A_{i,j}\neq0$. The matrix $\tilde{A}$ which is equal to $A$ except in the entry $\tilde{A}_{i,j}$, in which we still have $\tilde{A}_{i,j}\neq 0$, still belongs to $\mathcal{J}_{\pi}$;
 \item consider $A\in\mathcal{J}_{\pi}$ and $(i,j)\in D(\pi)$. Define $\tilde{A}$ such that all entries are the same as the entries of $A$, but those in the $i$-th row on the left of $(i,j)$, up to the diagonal, and those on the $j$-th column below $(i,j)$, up to the diagonal. These are the entries $\tilde{A}_{k,l}$ which correspond to the pairs $(k,l)\in \Sing(\pi)$. Hence $\tilde{A}\in\mathcal{J}_{\pi}$.
\end{itemize}
From these observations it is clear that $A^{k-1}\in\mathcal{J}_{\pi}$ if and only if $A^k\in\mathcal{J}_{\pi}$. Since the output of the algorithm is a canonical representative of $\mathcal{J}_{\pi}$, then it follows that for each matrix $A\in U_n$ there exists a unique $\pi\vdash[n]$ such that $A\in\mathcal{J}_{\pi}$.
\end{proof}
\begin{figure}
 \[A^0=\left[\begin{array}{ccccc}1&0&5&2&1\\&1&2&0&0\\&&1&5&0\\&&&1&4\\&&&&1\end{array}\right], A^1=A^0, A^2=\left[\begin{array}{ccccc}1&0&0&0&1\\&1&2&0&0\\&&1&5&0\\&&&1&0\\&&&&1\end{array}\right], A^3=A^2, A^4=\left[\begin{array}{ccccc}1&0&0&0&1\\&1&1&0&0\\&&1&1&0\\&&&1&0\\&&&&1\end{array}\right]\]\caption{Example of the algorithm described above; we start from a matrix $A=A^0\in U_n$ and we obtain a matrix $A^4$ which is canonical for the set $\mathcal{J}_{\pi}$ for $\pi=\{\{1,5\},\{2,3,4\}\}$. In general, if during the algorithm we find a non-zero term on the $k$-th diagonal then this corresponds necessarily to an arc of $\pi$ and not to a singular pair.}\label{figure matrices}
\end{figure}
\begin{proposition}
 For any $\pi\vdash[n]$, \[\SPl(\pi)=\frac{|\mathcal{J}_{\pi}|}{|U_n|}.\] Equivalently, the superplancherel measure of $\pi$ is the probability of choosing a random matrix in $U_n$ which belongs to $\mathcal{J}_{\pi}$.
\end{proposition}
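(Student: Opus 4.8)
The plan is to compute $|\mathcal{J}_{\pi}|$ directly from its defining conditions and then match it against the closed formula for $\SPl_n(\chi^{\pi})$ established above. A matrix $A\in\mathcal{J}_{\pi}$ is determined freely by its strictly-upper-triangular entries (the diagonal being fixed to $1$), and each position $(i,j)$ with $i<j$ falls into exactly one of three types dictated by the definition of $\mathcal{J}_{\pi}$: arc positions each contribute a factor $q-1$, regular non-arc positions are forced to $0$, and singular positions each contribute a factor $q$. Hence $|\mathcal{J}_{\pi}|=(q-1)^{d(\pi)}\,q^{|\Sing(\pi)|}$. Dividing by $|U_n|=q^{n(n-1)/2}$ and comparing with the formula
\[\SPl_n(\chi^{\pi})=\frac{1}{q^{n(n-1)/2}}\,(q-1)^{d(\pi)}\,q^{2\dim(\pi)-2d(\pi)-\crs(\pi)},\]
the proposition reduces to the purely combinatorial identity $|\Sing(\pi)|=2\dim(\pi)-2d(\pi)-\crs(\pi)$.

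To prove this identity I would classify singular pairs by the reason they fail to be regular and apply inclusion--exclusion. Call a pair $(k,l)$ with $k<l$ \emph{left-singular} if there is an arc $(a,l)\in D(\pi)$ with $a<k$, and \emph{right-singular} if there is an arc $(k,b)\in D(\pi)$ with $b>l$; by the definition of $\reg(\pi)$, a pair is singular precisely when it is left- or right-singular. Since each vertex is the right endpoint of at most one arc, for a fixed right endpoint $l$ carrying an arc $(a,l)$ the left-singular pairs are exactly the $(k,l)$ with $a<k<l$, giving $l-a-1$ of them. Summing over arcs yields $\sum_{(a,l)\in D(\pi)}(l-a-1)=\dim(\pi)-d(\pi)$, and by the symmetric count over left endpoints the right-singular pairs also number $\dim(\pi)-d(\pi)$.

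The final step is to identify the overlap of the two families. A pair $(k,l)$ that is both left- and right-singular comes equipped with arcs $(a,l)$, $a<k$, and $(k,b)$, $b>l$, so that $a<k<l<b$, which is exactly the crossing $\{(a,l),(k,b)\}$. Conversely a crossing $\{(i,j),(k,l)\}$ with $i<k<j<l$ produces the doubly-singular pair $(k,j)$, and one checks these two assignments are mutually inverse. Hence the overlap has size $\crs(\pi)$, and inclusion--exclusion gives $|\Sing(\pi)|=2(\dim(\pi)-d(\pi))-\crs(\pi)$, completing the reduction.

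I expect the overlap bijection to be the main obstacle: the clean counts $\dim(\pi)-d(\pi)$ for the left- and right-singular families are routine, but pinning down that doubly-singular pairs correspond \emph{bijectively} to crossings requires care. The key point making it work is that each vertex is the right endpoint of at most one arc and the left endpoint of at most one arc, so the arcs $(a,l)$ and $(k,b)$ attached to a doubly-singular pair $(k,l)$ are uniquely determined, and conversely the crossing $\{(i,j),(k,l)\}$ uniquely determines the pair $(k,j)$; verifying both directions and that no pair is counted twice within a single family is where the argument must be written out carefully.
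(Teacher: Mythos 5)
Your proof is correct and follows essentially the same route as the paper: both reduce the statement to the combinatorial identity $|\Sing(\pi)|=2(\dim(\pi)-d(\pi))-\crs(\pi)$ and establish it by inclusion--exclusion, with crossings accounting exactly for the doubly-counted singular pairs. The only difference is organizational: the paper writes $\Sing(\pi)$ as a union over arcs of the sets $\Sing(\sigma^n_{(i,j)})$ (and so must also observe that triple intersections vanish), whereas you split into left- and right-singular pairs and use a two-set inclusion--exclusion; the arithmetic is identical.
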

\begin{proof}
 It is enough to prove that \[|\mathcal{J}_{\pi}|=\frac{(q-1)^{d(\pi)}\cdot q^{2\dim(\pi)-2 d(\pi)}}{q^{\crs(\pi)}};\] in order to do so we calculate $|\Sing(\pi)|$. Given a pair $(i,j)$ we write $\sigma^n_{(i,j)}\vdash[n]$ for the set partition such that $D(\sigma_{(i,j)}^n)=\{(i,j)\}$. Then \[\Sing(\sigma^n_{(i,j)})=\{(i,i+1),\ldots,(i,j-1),(i+1,j),\ldots,(j-1,j)\}.\] The cardinality $|\Sing(\sigma^n_{(i,j)})|$ is clearly $2(j-i-1)$. It is immediate to see that 
\[\Sing(\pi)=\bigcup_{(i,j)\in D(\pi)}\Sing(\sigma^n_{(i,j)}).\]
We use the inclusion-exclusion formula to calculate $|\Sing(\pi)|$: notice that for a pair of different arcs $(i,j),(k,l)$ with $i<k$ then 
\[\Sing(\sigma^n_{(i,j)})\cap\Sing(\sigma^n_{(k,l)})=\left\{\begin{array}{lcl}\{(j,k)\}&\mbox{if}&i<k<j<l  \\ \emptyset&&\mbox{otherwise}\end{array}\right.\]
Moreover, for any triplet of different arcs $(i,j),(k,l),(r,s)\in D(\pi)$ we have \[\Sing(\sigma^n_{(i,j)})\cap\Sing(\sigma^n_{(k,l)})\cap\Sing(\sigma^n_{(r,s)})=\emptyset.\] Thus
\[|\Sing(\pi)|=\sum_{(i,j)\in D(\pi)}|\Sing(\sigma^n_{(i,j)})|-\sum_{\substack{(i,j)\neq(k,l)\\\mbox{in }D(\pi)}}|\Sing(\sigma^n_{(i,j)})\cap\Sing(\sigma^n_{(k,l)})|,\]
hence
\[|\Sing(\pi)|=\sum_{(i,j)\in D(\pi)}2(j-i-1)-\sharp\{(i,j),(l,k)\in D(\pi)\mbox{ s.t. }i<l<j<k\}=2(\dim(\pi)-d(\pi))-\crs(\pi),\]
which concludes the proof.
\end{proof}
We use this interpretation to generate the second picture of Figure \ref{fig:boat1}: we generate a random matrix $A\in U_n$, then we apply the algorithm described above in order to reduce $A$ to a canonical matrix $\tilde{A}$. We define $\pi$ as the set partition whose arcs are exactly the non zero entries of $\tilde{A}$, so that $\tilde{A}\in\mathcal{J}_{\pi}$. Such a set partition is random distributed with the superplancherel measure.
\section{Set partitions as measures on the unit square}\label{section set partitions as measures}
In this section we will describe an embedding of set partitions into particular measures on a subset of the unit square $\Delta=\{(x,y)\in [0,1]^2\mbox{ s.t. }y\geq x\}$. We settle first some notation: if $A\subseteq \R^2$ is measurable then we write $\lambda_A$ for the uniform measure on $A$ of total mass equal to $1$, that is, $\int_A\,d\lambda_A=1$; given $n\in\N, i<j\leq n$, set 
\[A_{i,j}=\left\{(x,y)\in\R^2\mbox{ s.t. }\frac{i-1}{n}\leq x\leq \frac{i}{n}, \frac{j-1}{n}\leq y\leq \frac{j}{n}\right\}\subseteq\Delta;\]
for $\pi\vdash[n]$ we will write $A_{\pi}:=\bigcup_{(i,j)\in D(\pi)}A_{i,j}$ and $\mu_{\pi}=\frac{1}{n}\sum_{(i,j)\in D(\pi)}\lambda_{A_{i,j}}$. An example is given in Figure \ref{example of the measure mu_pi}.

\begin{figure}[H]
 \begin{center}
 \[\mu_{\pi}=\begin{array}{c}
\begin{tikzpicture}[scale=0.5]
\draw [thick, <->] (0,10) -- (0,0) -- (10,0);
\draw [dashed](0,0)--(9,9)--(0,9);
\draw [fill=lightgray](0,4) rectangle (1,5);
\draw [fill=lightgray](4,6) rectangle (5,7);
\draw [fill=lightgray](2,3) rectangle (3,4);
\draw [fill=lightgray](3,8) rectangle (4,9);
\draw [fill=lightgray](5,7) rectangle (6,8);
\draw [thick] (-.1,9) node[left]{1} -- (.1,9);
\draw [thick] (-.1,8) node[left]{8/9} -- (.1,8);
\draw [thick] (-.1,7) node[left]{7/9} -- (.1,7);
\draw [thick] (-.1,6) node[left]{6/9} -- (.1,6);
\draw [thick] (-.1,5) node[left]{5/9} -- (.1,5);
\draw [thick] (-.1,4) node[left]{4/9} -- (.1,4);
\draw [thick] (-.1,3) node[left]{3/9} -- (.1,3);
\draw [thick] (-.1,2) node[left]{2/9} -- (.1,2);
\draw [thick] (-.1,1) node[left]{1/9} -- (.1,1);
\draw [thick] (-.1,0) node[left]{0} -- (.1,0);
\draw [thick] (9,-.1) node[below]{1} -- (9,.1);
\draw [thick] (8,-.1) node[below]{$\frac{8}{9}$} -- (8,.1);
\draw [thick] (7,-.1) node[below]{$\frac{7}{9}$} -- (7,.1);
\draw [thick] (6,-.1) node[below]{$\frac{6}{9}$} -- (6,.1);
\draw [thick] (5,-.1) node[below]{$\frac{5}{9}$} -- (5,.1);
\draw [thick] (4,-.1) node[below]{$\frac{4}{9}$} -- (4,.1);
\draw [thick] (3,-.1) node[below]{$\frac{3}{9}$} -- (3,.1);
\draw [thick] (2,-.1) node[below]{$\frac{2}{9}$} -- (2,.1);
\draw [thick] (1,-.1) node[below]{$\frac{1}{9}$} -- (1,.1);
%\draw [thick] (0,-.1) node[below]{0} -- (0,.1);
 \end{tikzpicture}\end{array}\]
 \end{center}
 \caption{Example of the measure $\mu_{\pi}$ on $\Delta$ corresponding to $\pi=\left\{\{1,5,7\},\{2\},\{3,4,9\},\{6,8\}\right\}\vdash[9]$ of Figure \ref{picture standard representation}. Everywhere but the gray areas has zero weight, while the gray areas represent where the measure has uniform weight. Each square has total weight $\frac{1}{n}=\frac{1}{9}$, so that the total weight is $\int_{\Delta}\,d\mu=\frac{5}{9}$.}\label{example of the measure mu_pi}
 \end{figure}
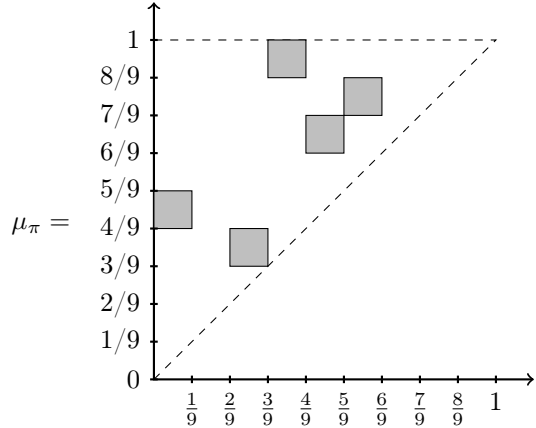
\begin{defi}
Let $X\subseteq \R^2$, set $\pi_1$ (resp. $\pi_2$) the projection into the first (resp. the second) coordinate. A measure $\mu$ on $X$ is said to have \emph{uniform marginals} if for each interval $I\subseteq \pi_1(X)$ and $J\subseteq\pi_2(X)$
\[\mu(I\times \pi_2(X))=|I|,\]
\[\mu(\pi_1(X)\times J)=|J|.\]
Similarly, the measure $\mu$ has \emph{subuniform marginals} if for each interval $I\subseteq \pi_1(X)$ and $J\subseteq\pi_2(X)$
\[\mu(I\times \pi_2(X))\leq|I|,\]
\[\mu(\pi_1(X)\times J)\leq|J|.\]
\end{defi}
As a measure on $\Delta$, $\mu_{\pi}$ has subuniform marginal and in particular $\int_{\Delta}\,d\mu\leq 1$. We call \emph{subprobability} a positive measure with total weight less than or equal to $1$, so that $\mu_{\pi}$ is a subprobability. We will sometimes deal with measures $\mu$ of $\Delta$ as measures on the whole square unit interval $[0,1]^2$, assuming that $\mu([0,1]^2\setminus \Delta)=0$.

\subsection{Statistics of set partitions approximated by integrals}
We define the following space of measures:
\[\Gamma:=\{\mbox{subprobabilities }\mu \mbox{ on }\Delta\mbox{ s.t. }\mu\mbox{ has subuniform marginals}\};\]
In this new setting we can describe the values of $d(\pi), \dim(\pi),\crs(\pi)$ as follows:
\begin{lemma}
Let $\pi\vdash[n]$, so that $\mu_{\pi}\in\Gamma$, then 
\begin{enumerate}
\item $d(\pi)\in O(n)$;
\item $\dim(\pi)=n^2\int_{\Delta}(y-x)\,d\mu_{\pi}(x,y);$
\item $\crs(\pi)=n^2\int_{\Delta^2}\mathbb{1}[x_1<x_2<y_1<y_2]\,d\mu_{\pi}(x_1,y_1)\,d\mu_{\pi}(x_2,y_2) + O(n).$
\end{enumerate}
\end{lemma}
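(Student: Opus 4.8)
Statements (1) and (2) are short. For (1), each block $B$ of $\pi$ contributes exactly $|B|-1$ arcs, so $d(\pi)=\sum_B(|B|-1)=n-(\text{number of blocks of }\pi)\le n-1$, which is $O(n)$. For (2) the plan is to use linearity together with the product structure of $\lambda_{A_{i,j}}$: writing $\mu_\pi=\frac1n\sum_{(i,j)\in D(\pi)}\lambda_{A_{i,j}}$ reduces $\int_\Delta(y-x)\,d\mu_\pi$ to a sum of integrals over single squares. Since $\lambda_{A_{i,j}}$ is the normalized uniform measure on $A_{i,j}=[\frac{i-1}{n},\frac in]\times[\frac{j-1}{n},\frac jn]$, the average of the linear function $y-x$ over $A_{i,j}$ equals its value at the centre, namely $\frac{j-i}{n}$. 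Summing yields $\int_\Delta(y-x)\,d\mu_\pi=\frac1{n^2}\sum_{(i,j)\in D(\pi)}(j-i)=\dim(\pi)/n^2$, an exact identity with no error term.

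The third statement is the real content, and the plan is to convert the double integral into a sum over pairs of arcs and then isolate the crossings. Expanding both copies of $\mu_\pi$ gives
\[
\int_{\Delta^2}\mathbb{1}[x_1<x_2<y_1<y_2]\,d\mu_\pi\,d\mu_\pi=\frac1{n^2}\sum_{(i,j),(k,l)\in D(\pi)}J(i,j,k,l),
\]
where $J(i,j,k,l)$ is the probability that $x_1<x_2<y_1<y_2$ when $x_1,y_1,x_2,y_2$ are independent and uniform on the length-$1/n$ subintervals indexed by $i,j,k,l$ respectively. These four variables are genuinely independent, since each $\lambda_A$ is a product measure and the two points are drawn independently. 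Because consecutive subintervals meet only at their endpoints, each of the three comparisons $x_1<x_2$, $x_2<y_1$, $y_1<y_2$ is almost surely decided by the order of the indices, unless those indices coincide. I would then record that $J=1$ precisely when $i<k<j<l$, that $J=0$ unless $i\le k\le j\le l$, and that otherwise $J$ takes a fixed value in $(0,1)$.

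With this in hand, the ordered pairs of arcs satisfying $i<k<j<l$ are in bijection with the crossings of $\pi$: given an unordered crossing, only the ordering that places the arc with the smaller left endpoint first realises the pattern. Hence these terms contribute exactly $\crs(\pi)$ to $n^2$ times the integral, and all that remains is to control the boundary terms where some of the indices coincide.

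Bounding these boundary terms is the main obstacle, and the crucial structural fact is that in a set partition every vertex is the left endpoint of at most one arc and the right endpoint of at most one arc. Consequently $i=k$ forces the two arcs to be equal, and $j=l$ likewise forces them equal; the diagonal terms $(i,j)=(k,l)$ therefore number at most $d(\pi)\le n-1$. The only remaining boundary family is $k=j$ with $i<j<l$, namely two arcs sharing a single vertex as right, respectively left, endpoint, and endpoint-uniqueness permits at most one such pair per vertex, hence at most $n$ in total. Since each boundary term contributes a bounded value of $J$, their total is $O(n)$. Dividing by $n^2$ and rearranging gives $\crs(\pi)=n^2\int_{\Delta^2}\mathbb{1}[x_1<x_2<y_1<y_2]\,d\mu_\pi\,d\mu_\pi+O(n)$, as claimed. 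The delicate point throughout is precisely this bookkeeping of degenerate index-coincidences; it is the endpoint-uniqueness of arcs that guarantees all of them are $O(n)$.
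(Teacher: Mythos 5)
Your proposal is correct and follows essentially the same route as the paper: linearity over arcs for $\dim$, and for $\crs$ an expansion of the double integral over ordered pairs of arcs followed by a case analysis of the index coincidences $(i,j)=(k,l)$ and $j=k$, which endpoint-uniqueness shows are the only degenerate cases and number $O(n)$. The only cosmetic differences are that you bound the degenerate contributions by $1$ where the paper computes them exactly ($\tfrac14$ and $\tfrac12$), and your phrase ``dividing by $n^2$'' is a slip --- the identity $n^2\int = \sum J$ already gives the claim directly.
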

\begin{proof}
 \begin{enumerate}
  \item It is immediate to see that $d(\pi)\leq n-1$, with equality if and only if $\pi=\{\{1,2,\ldots,n\}\}$.
  \item Since $\mu_{\pi}=\sum_{(i,j)\in D(\pi)}\mu_{\{\{i,j\}\}}$, it is enough to prove the statement for $\pi=\sigma^n_{(i,j)}$ (recall that $D(\sigma_{(i,j)}^n)=\{(i,j)\}$). Notice moreover that for $f\colon \R^2\to \R$ measurable
  \[\int_{A_{i,j}} f(x,y)\,d\lambda_{A_{i,j}}=\frac{\int_{A_{i,j}} f(x,y)\,dx\,dy}{\int_{A_{i,j}} \,dx\,dy}=n^2\int_{A_{i,j}} f(x,y)\,dx\,dy.\]
Therefore
  \begin{align*}
   n^2\int_{\Delta}(y-x)\,d\mu_{\pi}(x,y)&=n\int_{\Delta}(y-x)\,d\lambda_{A_{i,j}}\\&= n^3\int_{A_{i,j}} (y-x)\,dx\,dy\\&=j-i\\&=\dim(\pi).
  \end{align*}
\item Similarly as before, we have that for $A,B$ bounded subsets of $\R^2$ and $f\colon \R^4\to \R$
\[\int_{A\times B} f(x_1,y_1,x_2,y_2)\,d\lambda_A(x_1,y_1)\,d\lambda_B(x_2,y_2)=\frac{\int_{A\times B} f(x_1,y_1,x_2,y_2)\,dx_1\,dy_1\,dx_2\,dy_2}{\int_A \,dx\,dy\int_B\,dx\,dy}.\]
We see that
\begin{align*}
 n^2\int_{\Delta^2}\mathbb{1}[x_1<x_2<y_1<y_2]&\,d\mu_{\pi}(x_1,y_1)\,d\mu_{\pi}(x_2,y_2)\\
 &=\sum_{(i,j),(k,l)\in D(\pi)}\int_{\Delta^2}\mathbb{1}[x_1<x_2<y_1<y_2]\,d\lambda_{A_{(i,j)}}(x_1,y_1)\,d\lambda_{A_{(k,l)}}(x_2,y_2)\\
 &=n^4\sum_{(i,j),(k,l)\in D(\pi)}\int_{A_{(i,j)}\times A_{(k,l)}}\mathbb{1}[x_1<x_2<y_1<y_2]\,dx_1\,dy_1\,dx_2\,dy_2.\\
\end{align*}
Suppose $(i,j),(k,l)\in D(\pi)$ and call $\mathcal{I}[i,j,k,l]=\int_{A_{(i,j)}\times A_{(k,l)}}\mathbb{1}[x_1<x_2<y_1<y_2]\,dx_1\,dy_1\,dx_2\,dy_2,$ direct computations show that
\begin{itemize}
 \item if $i<k<j<l$ then $\mathcal{I}[i,j,k,l]=\frac{1}{n^4}$;
 \item if $i<j=k<l$ then $\mathcal{I}[i,j,k,l]=\frac{1}{2n^4}$;
 \item if $i=k<j=l$ then $\mathcal{I}[i,j,k,l]=\frac{1}{4n^4}$;
 \item in any other case $\mathcal{I}[i,j,k,l]=0$.
\end{itemize}
Hence
\[\crs(\pi)=n^2\int_{\Delta^2}\mathbb{1}[x_1<x_2<y_1<y_2]\,d\mu_{\pi}(x_1,y_1)\,d\mu_{\pi}(x_2,y_2)-\frac{1}{4}d(\pi)-\frac{1}{2}\#\{\mbox{adjacent arcs of }\pi\},\]
where a pair of arcs $(i,j),(k,l)\in D(\pi)$ is \emph{adjacent} if $j=k$. Since the number of adjacent arcs is obviously less than the number of arcs, the second and the third terms of the RHS are $O(n)$, and we conclude.\qedhere\end{enumerate}
 \end{proof}

We can therefore write 
\begin{align*}
 &\SPl_{n}(\chi^{\pi})=\frac{1}{q^{\frac{n(n-1)}{2}}}\frac{q^{2\dim(\pi)-2 d(\pi)}}{(q-1)^{d(\pi)}q^{\crs(\pi)}}=\\
 &\exp\left(\log q\left(-\frac{n^2}{2}+\frac{n}{2}+\frac{\log (q-1)}{\log q}d(\pi)-2d(\pi)+2\dim(\pi)-\crs(\pi)\right)\right)=\\
 &\exp\left(-n^2\log q\left(\frac{1}{2}-2\int_{\Delta}(y-x)\,d\mu_{\pi}(x,y) +\int_{\mathrlap{\Delta^2}}\mathbb{1}[x_1<x_2<y_1<y_2]\,d\mu_{\pi}(x_1,y_1)\,d\mu_{\pi}(x_2,y_2)\right)+ O(n)\right). 
\end{align*}
For each measure $\mu\in\Gamma$ we set thus 
\begin{itemize}
 \item $I_1(\mu):=\int_{\Delta}(y-x)\,d\mu$;
 \item $I_2(\mu):=\int_{\Delta^2}\mathbb{1}[x_1<x_2<y_1<y_2]\,d\mu(x_1,y_1)\,d\mu(x_2,y_2)$;
 \item $I(\mu):=\frac{1}{2}-2I_1(\mu)+I_2(\mu)$.
\end{itemize}
Hence for $\pi\vdash[n]$ we have
\begin{equation}\label{formula of superplancherel with entropy}
 \SPl_{n}(\chi^{\pi})=\exp\left(-n^2\log q\cdot I(\mu_{\pi})+O(n)\right).
\end{equation}
\subsection{Maximizing the entropy}
We set
\[\tilde{\Gamma}:=\{\mbox{subprobabilities }\mu \mbox{ on }[0,1/2]\times[1/2,1]\mbox{ s.t. }\mu\mbox{ has uniform marginals}\}.\]
Recall that for a measurable function $f$ and a measure $\mu$ the push forward is
\[f_{\ast}\mu(A):=\mu(f_{\ast}^{-1}(A))\]
for each $A$ measurable. Consider $f(x)=1-x$ and the Lebesgue measure $\Leb([0,1/2])$ on the interval $[0,1/2]$. Define $\Omega$ as $\Omega:=f_{\ast}\Leb([0,1/2])$ and notice that $\Omega\in\tilde{\Gamma}$. The goal of this section is to prove the following proposition:
\begin{proposition}\label{minimizing I}
 Consider $\mu\in\Gamma$, then $I(\mu)=0$ if and only if $\mu=\Omega$.
\end{proposition}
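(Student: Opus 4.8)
The plan is to show that $I(\mu)\ge 0$ on all of $\Gamma$ with equality exactly at $\Omega$, by treating separately the two contributions $\tfrac12-2I_1(\mu)$ and $I_2(\mu)$. Since the integrand defining $I_2$ is an indicator, $I_2(\mu)\ge 0$ automatically, so the first task is to prove $I_1(\mu)\le\tfrac14$ for every $\mu\in\Gamma$, which already gives $I(\mu)\ge 0$. Writing $I_1(\mu)=\int_\Delta y\,d\mu-\int_\Delta x\,d\mu$ and setting $m=\mu(\Delta)\le 1$, I would bound each term by a rearrangement argument based on the subuniform marginals: the second marginal has total mass $m$ and density at most $1$, so pushing its mass as high as possible gives $\int_\Delta y\,d\mu\le\int_{1-m}^1 y\,dy=m-\tfrac{m^2}{2}$, and symmetrically $\int_\Delta x\,d\mu\ge\int_0^m x\,dx=\tfrac{m^2}{2}$. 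Hence $I_1(\mu)\le m-m^2\le\tfrac14$, the last inequality being maximised at $m=\tfrac12$.

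The equality analysis of this bound is the first structural step. If $I_1(\mu)=\tfrac14$ then $m=\tfrac12$ and both rearrangement inequalities are tight; by the uniqueness of the extremal (``bathtub'') rearrangement among densities bounded by $1$, this forces the first marginal to be $\Leb$ restricted to $[0,1/2]$ and the second to be $\Leb$ restricted to $[1/2,1]$. In particular $\mu$ is concentrated on $[0,1/2]\times[1/2,1]$ with uniform marginals, i.e. $\mu\in\tilde\Gamma$. Consequently $I(\mu)=0$ forces simultaneously $I_1(\mu)=\tfrac14$ (so $\mu\in\tilde\Gamma$) and $I_2(\mu)=0$.

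The heart of the proof, and the step I expect to be the main obstacle, is to show that a measure $\mu\in\tilde\Gamma$ with $I_2(\mu)=0$ must equal $\Omega$. Here I would first observe that on $[0,1/2]\times[1/2,1]$ one has $x_2\le\tfrac12\le y_1$, so up to the $\mu\otimes\mu$-null boundary $\{x_2=y_1=\tfrac12\}$ the integrand collapses to $\mathbb{1}[x_1<x_2]\,\mathbb{1}[y_1<y_2]$; thus $I_2(\mu)=0$ says that $\mu\otimes\mu$-almost no pair of points is concordant (strictly increasing in both coordinates). I would then exploit the atomless uniform marginals: for $x\in[0,1/2]$ the sets $A_x=\{x'\le x\}$ and $B_x=\{y'\ge 1-x\}$ both carry mass exactly $x$. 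If $\mu(A_x\setminus B_x)>0$, then, the two masses being equal, also $\mu(B_x\setminus A_x)>0$, and any $P\in A_x\setminus B_x$ and $Q\in B_x\setminus A_x$ satisfy $x_P<x_Q$ and $y_P<y_Q$, producing a positive-measure set of concordant pairs and contradicting $I_2(\mu)=0$. Hence $\mu(A_x\triangle B_x)=0$ for every $x$; running this over a countable dense family of values of $x$ shows that $\mu$ is concentrated on the anti-diagonal $\{y=1-x\}$, and combined with the uniform first marginal this identifies $\mu=\Omega$.

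Finally I would dispatch the reverse implication by the direct computation $I_1(\Omega)=\int_0^{1/2}(1-2x)\,dx=\tfrac14$ together with $I_2(\Omega)=0$ (on $y=1-x$ the requirements $x_1<x_2$ and $y_1<y_2$ are incompatible), so that $I(\Omega)=\tfrac12-\tfrac12+0=0$. The only genuinely delicate points are the uniqueness of the extremal rearrangement used for the $I_1$ bound and the null-set bookkeeping in the concordance argument; the remaining steps are routine.
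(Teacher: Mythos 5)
Your proposal is correct and follows essentially the same route as the paper: first the bound $I_1(\mu)\le 1/4$ with equality characterising $\tilde{\Gamma}$ (your bathtub rearrangement of the two marginals is the same computation the paper performs after explicitly squeezing $\mu$ to a measure with uniform marginals on a rectangle), then the concordant-pair argument showing that $\mu\in\tilde{\Gamma}$ with $I_2(\mu)=0$ is supported on the anti-diagonal (your sets $A_x\setminus B_x$ and $B_x\setminus A_x$ are exactly the paper's rectangles $S$ and $Q$ in the case $a=b$, from which the paper instead recovers the full distribution function $F_\mu(a,b)=\min(a,b)$). Both treatments leave the same standard facts implicit, namely uniqueness in the rearrangement equality case and the null-set bookkeeping near $x_2=y_1=\tfrac12$, so there is no genuine gap.
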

We will prove the proposition after studying the two functionals $I_1$ and $I_2$.
\begin{lemma}\label{maximizing I_1}
 Let $\mu\in\Gamma$, then $I_1(\mu)=\int_{\Delta}(y-x)\,d\mu\leq 1/4$, with equality if and only if $\mu\in\tilde{\Gamma}$.
\end{lemma}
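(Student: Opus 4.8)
The plan is to separate $I_1(\mu)=\int_\Delta (y-x)\,d\mu$ into its two one-dimensional marginals and optimise each in isolation. Writing $\mu_1:=(\pi_1)_\ast\mu$ and $\mu_2:=(\pi_2)_\ast\mu$ for the pushforwards of $\mu$ onto the first and second coordinate axes, I would first record that $I_1(\mu)=\int_0^1 y\,d\mu_2(y)-\int_0^1 x\,d\mu_1(x)$. The subuniform marginal hypothesis says precisely that $\mu_i([a,b])\le b-a$ for every subinterval; by inner regularity this upgrades to $\mu_i(B)\le\Leb(B)$ for all Borel $B$, so each $\mu_i$ is absolutely continuous with a density $g_i$ satisfying $0\le g_i\le 1$ almost everywhere. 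Both marginals carry the same total mass $m:=\mu(\Delta)\in[0,1]$, that is, $\int_0^1 g_i=m$.

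Next I would solve the two extremal problems: among densities $0\le g\le 1$ with $\int_0^1 g=m$, maximise $\int_0^1 y\,g(y)\,dy$ and minimise $\int_0^1 x\,g(x)\,dx$. The extremisers are the ``bathtub'' indicators $\mathbb{1}_{[1-m,1]}$ and $\mathbb{1}_{[0,m]}$ respectively. The comparison is elementary: if $h=g-\mathbb{1}_{[1-m,1]}$ then $\int_0^1 h=0$, while $h\ge 0$ on $[0,1-m)$ and $h\le 0$ on $(1-m,1]$; since $y-(1-m)$ has the opposite sign on these two regions, $\int_0^1 (y-(1-m))\,h(y)\,dy\le 0$, which rearranges to $\int y\,g\le \int_{1-m}^1 y\,dy=m-\tfrac{m^2}{2}$. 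The dual argument gives $\int x\,g\ge \tfrac{m^2}{2}$. Combining, $I_1(\mu)\le m-m^2$, and since $m-m^2\le\tfrac14$ on $[0,1]$ with equality only at $m=\tfrac12$, the bound $I_1(\mu)\le\tfrac14$ follows.

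For the equality case I would run the argument backwards. Equality forces $m=\tfrac12$ together with equality in both rearrangement bounds; the sign analysis above is strict unless $(y-(1-m))\,h=0$ almost everywhere, which (the exceptional point $y=1-m$ being negligible) pins down $g_2=\mathbb{1}_{[1/2,1]}$ and $g_1=\mathbb{1}_{[0,1/2]}$ almost everywhere. Thus the $x$-coordinate lies in $[0,1/2]$ and the $y$-coordinate in $[1/2,1]$ for $\mu$-almost every point, so $\mu$ is supported on $[0,1/2]\times[1/2,1]$ (a region automatically contained in $\Delta$), and its marginals are exactly the uniform ones; hence $\mu\in\tilde\Gamma$. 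Conversely, any $\mu\in\tilde\Gamma$ has $\mu_1=\Leb|_{[0,1/2]}$ and $\mu_2=\Leb|_{[1/2,1]}$, so a direct computation gives $I_1(\mu)=\int_{1/2}^1 y\,dy-\int_0^{1/2}x\,dx=\tfrac38-\tfrac18=\tfrac14$.

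I expect the only genuine obstacle to be the equality analysis rather than the bound itself. Two points need care: promoting the interval inequalities to $\mu_i\le\Leb$ as measures (ruling out any singular part, which would violate the interval bound on shrinking intervals), and then arguing that extremality of each marginal \emph{separately} already forces the joint support into the rectangle $[0,1/2]\times[1/2,1]$. Both become routine once the problem is phrased through the Radon--Nikodym densities $g_i$, so the heart of the proof is the single bathtub comparison above.
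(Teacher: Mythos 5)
Your proof is correct, and it reaches the same extremal value $m-m^2$ (with $m=\mu(\Delta)$) as the paper, but by a different mechanism. The paper does not pass to densities at all: it defines the monotone maps $f_{\mu}(x)=\mu([0,x]\times[0,1])\le x$ and $g_{\mu}(y)=1-\mu([0,1]\times[1-y,1])\ge y$ and pushes $\mu$ forward by $(x,y)\mapsto(f_{\mu}(x),g_{\mu}(y))$, producing a ``squeezed'' measure $\tilde{\mu}$ with uniform marginals on $[0,m]\times[1-m,1]$; the inequality $I_1(\mu)\le I_1(\tilde{\mu})$ then falls out of $g_{\mu}(y)\ge y$ and $f_{\mu}(x)\le x$, and $I_1(\tilde{\mu})=m(1-m)$ is computed from the marginals of $\tilde{\mu}$. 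Your route instead observes that $I_1$ factors through the two one-dimensional marginals, upgrades the interval bounds to $\mu_i\le\Leb$ (hence densities $0\le g_i\le 1$), and applies the bathtub comparison to each marginal separately. What your approach buys is a cleaner equality analysis: the condition $(y-(1-m))h=0$ a.e.\ pins down the marginals exactly and then the support, whereas the paper's equality case rests on the assertion that $I_1(\tilde{\mu})=I_1(\mu)$ forces $\tilde{\mu}=\mu$, which is stated rather briskly. What the paper's construction buys is that the extremal measure is exhibited constructively as a rearrangement of $\mu$ (a picture that recurs in its Figure for the proof), rather than appearing only as a bound. One small point worth making explicit in your write-up is the upgrade from the interval inequality to $\mu_i(B)\le\Leb(B)$ for all Borel $B$ (via open sets and outer regularity), which you correctly flag; once that is in place the rest is airtight, including the observation that extremality of each marginal separately already confines the support to $[0,1/2]\times[1/2,1]\subseteq\Delta$.
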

\begin{proof}
 We show that for each $\mu\in\Gamma$ there exists a measure $\tilde{\mu}\in\Gamma$ and intervals $I_{\mu}\subseteq[0,1]$ and $J_{\mu}\subseteq[0,1]$ such that $\tilde{\mu}$ has uniform marginals as a measure of $I_{\mu}\times J_{\mu}$ and $I_1(\mu)\leq I_1(\tilde{\mu})$. This will be proved by ``squeezing'' the measure $\mu$ toward the top-left corner of $\Delta$. Set $I_{\mu}=[0,\mu(\Delta)]$, $J_{\mu}=[1-\mu(\Delta),1]$, $f_{\mu}(x)=\mu([0,x]\times [0,1])\leq x$ and $g_{\mu}(y):=1-\mu([0,1]\times[1-y,1])\geq y$. We define the measure $\tilde{\mu}$ as the push forward of $\mu$ by the function $(x,y)\to(f_{\mu}(x),g_{\mu}(y))$. It is evident that $\tilde{\mu}([0,1]^2\setminus\Delta)=0$. By construction, $\tilde{\mu}$ has uniform marginals on $I_{\mu}\times J_{\mu}$. Therefore
 \begin{align*}
  I_1(\tilde{\mu})&=\int_{\Delta}(v-u)\,d\tilde{\mu}(u,v)\\
  &=\int_{I_{\mu}\times J_{\mu}}(v-u)\,d\tilde{\mu}(u,v)\\
  &=\int_\Delta(g_{\mu}(y)-f_{\mu}(x))\,d\mu(x,y)\geq\int_{\Delta}(y-x)\,d\mu(x,y)=I_1(\mu),  
 \end{align*}
where the inequality comes from $g_{\mu}(y)\geq y$ and $f_{\mu}(x)\leq x$. Notice that we have $I_1(\tilde{\mu})=I(\mu)$ if and only if $f_{\mu}(x)=x$ and $g_{\mu}(y)=y$ almost everywhere according to the marginal of $\mu$ in, respectively, the $x$ and $y$ coordinates. This is equivalent to $\tilde{\mu}=\mu$. For an example of this construction, see Figure \ref{picture  for the proof of I_1}.
 \smallskip
 
 Set $l_{\mu}=\mu(\Delta)$. We show that $I_1(\tilde{\mu})=l_{\mu}(1-l_{\mu})$. We write $I_1(\tilde{\mu})=\int_{\Delta}y\,d\tilde{\mu}-\int_{\Delta}x\,d\tilde{\mu}$ and consider the two integrals separately. Observe that the $y$-marginal of $\tilde{\mu}$ is $\Leb([1-l_{\mu},1])$, the Lebesgue measure on the interval $[1-l_{\mu},1]$; hence
 \[\int_{y=0}^{y=1}y\int_{x=0}^{x=1}\,d\tilde{\mu}(x,y)=\int_{1-l_{\mu}}^1 y\,dy=l_{\mu}-\frac{l_{\mu}^2}{2}.\]
Similarly (the $x$-marginal of $\tilde{\mu}$ is $\Leb([0,l_{\mu}])$)
 \[\int_{x=0}^{x=1}x\int_{y=0}^{y=1}\,d\tilde{\mu}(x,y)=\int_0^{l_{\mu}}x\,dx=\frac{l_{\mu}^2}{2}.\]
 Therefore $I_1(\tilde{\mu})=\int_{\Delta}y\,d\tilde{\mu}-\int_{\Delta}x\,d\tilde{\mu}=l_{\mu}-l_{\mu}^2.$

Since $l_{\mu}\leq 1$, the maximal value of $l_{\mu}(1-l_{\mu})$ is obtained when $l_{\mu}=1/2$, in which case $I_1(\tilde{\mu})=1/4$. We showed thus that for each measure $\mu\in\Gamma$ there exists a measure $\tilde{\mu}\in\Gamma$ such that $I_1(\mu)\leq I_1(\tilde{\mu})\leq 1/4$, with equality if and only if $\mu\in\tilde{\Gamma}$, which concludes the proof.
\end{proof}

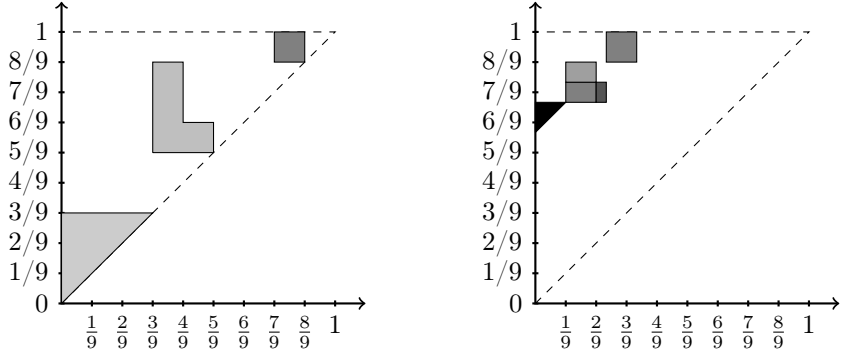
\begin{figure}
 \begin{center}
 \[ \begin{tikzpicture}[scale=0.4]
\draw [thick, <->] (0,10) -- (0,0) -- (10,0);
\draw [dashed](0,0)--(9,9)--(0,9);
\draw [thick] (-.1,9) node[left]{1} -- (.1,9);
\draw [thick] (-.1,8) node[left]{8/9} -- (.1,8);
\draw [thick] (-.1,7) node[left]{7/9} -- (.1,7);
\draw [thick] (-.1,6) node[left]{6/9} -- (.1,6);
\draw [thick] (-.1,5) node[left]{5/9} -- (.1,5);
\draw [thick] (-.1,4) node[left]{4/9} -- (.1,4);
\draw [thick] (-.1,3) node[left]{3/9} -- (.1,3);
\draw [thick] (-.1,2) node[left]{2/9} -- (.1,2);
\draw [thick] (-.1,1) node[left]{1/9} -- (.1,1);
\draw [thick] (-.1,0) node[left]{0} -- (.1,0);
\draw [thick] (9,-.1) node[below]{1} -- (9,.1);
\draw [thick] (8,-.1) node[below]{$\frac{8}{9}$} -- (8,.1);
\draw [thick] (7,-.1) node[below]{$\frac{7}{9}$} -- (7,.1);
\draw [thick] (6,-.1) node[below]{$\frac{6}{9}$} -- (6,.1);
\draw [thick] (5,-.1) node[below]{$\frac{5}{9}$} -- (5,.1);
\draw [thick] (4,-.1) node[below]{$\frac{4}{9}$} -- (4,.1);
\draw [thick] (3,-.1) node[below]{$\frac{3}{9}$} -- (3,.1);
\draw [thick] (2,-.1) node[below]{$\frac{2}{9}$} -- (2,.1);
\draw [thick] (1,-.1) node[below]{$\frac{1}{9}$} -- (1,.1);
\draw [fill={rgb:black,1;white,4}] (0,0) -- (3,3) -- (0,3) -- (0,0);% density is 2
\draw [fill={rgb:black,1;white,3}] (3,5) -- (5,5) -- (5,6) -- (4,6) -- (4,8) -- (3,8) -- (3,5);% density is 3
\draw [fill={rgb:black,1;white,1}] (7,8) rectangle (8,9);% density is 9
\end{tikzpicture} \qquad\qquad \begin{tikzpicture}[scale=0.4]
\draw [thick, <->] (0,10) -- (0,0) -- (10,0);
\draw [dashed](0,0)--(9,9)--(0,9);
\draw [thick] (-.1,9) node[left]{1} -- (.1,9);
\draw [thick] (-.1,8) node[left]{8/9} -- (.1,8);
\draw [thick] (-.1,7) node[left]{7/9} -- (.1,7);
\draw [thick] (-.1,6) node[left]{6/9} -- (.1,6);
\draw [thick] (-.1,5) node[left]{5/9} -- (.1,5);
\draw [thick] (-.1,4) node[left]{4/9} -- (.1,4);
\draw [thick] (-.1,3) node[left]{3/9} -- (.1,3);
\draw [thick] (-.1,2) node[left]{2/9} -- (.1,2);
\draw [thick] (-.1,1) node[left]{1/9} -- (.1,1);
\draw [thick] (-.1,0) node[left]{0} -- (.1,0);
\draw [thick] (9,-.1) node[below]{1} -- (9,.1);
\draw [thick] (8,-.1) node[below]{$\frac{8}{9}$} -- (8,.1);
\draw [thick] (7,-.1) node[below]{$\frac{7}{9}$} -- (7,.1);
\draw [thick] (6,-.1) node[below]{$\frac{6}{9}$} -- (6,.1);
\draw [thick] (5,-.1) node[below]{$\frac{5}{9}$} -- (5,.1);
\draw [thick] (4,-.1) node[below]{$\frac{4}{9}$} -- (4,.1);
\draw [thick] (3,-.1) node[below]{$\frac{3}{9}$} -- (3,.1);
\draw [thick] (2,-.1) node[below]{$\frac{2}{9}$} -- (2,.1);
\draw [thick] (1,-.1) node[below]{$\frac{1}{9}$} -- (1,.1);
\draw [fill={rgb:black,1}] (0,17/3) -- (1,20/3) -- (0,20/3) -- (0,17/3);% density is 18
\draw [fill={rgb:black,1;white,1}] (1,20/3) rectangle (2,22/3);% density is 9
\draw [fill={rgb:black,3;white,5}] (1,22/3) rectangle (2,8);% density is 29/2
\draw [fill={rgb:black,2;white,1}] (2,20/3) rectangle (7/3,22/3);% density is 27/2
\draw [fill={rgb:black,1;white,1}] (7/3,8) rectangle (10/3,9);% density is 9
\end{tikzpicture}\]
 \end{center}
 \caption{Example of the transformation of $\mu$ (left image) into $\tilde{\mu}$ (right image). The weights of the measure $\mu$ are the following: the triangle shape has a total weight of $1/9$, the \textbf{L} shape has a weight of $4/27$, and the top right box has weight $1/9$. Notice that the measure $\mu$ restricted to the top right box has already uniform marginals, hence the box will not be squeezed by the transformation into $\tilde{\mu}$, but will just shift.}\label{picture for the proof of I_1}
 \end{figure}
An immediate consequence of the previous lemma is that $I(\mu)\geq 0$, and we have $I(\mu)=0$ if and only if 
\begin{itemize}
 \item $\mu\in\tilde{\Gamma}$,
 \item $I_2(\mu)=0.$
\end{itemize} 
\smallskip 

\begin{lemma}\label{proposition mu=lambda_omega}
 Let $\mu\in\tilde{\Gamma}$ such that $I_2(\mu)=0.$ Then $\mu=\Omega$. 
\end{lemma}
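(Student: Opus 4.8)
The plan is to read the hypothesis $I_2(\mu)=0$ as a \emph{no-crossing} condition and to exploit it through the two-dimensional distribution function of $\mu$. Recall that every $\mu\in\tilde\Gamma$ is carried by the box $[0,1/2]\times[1/2,1]$, so I introduce the lower-left and upper-right masses
\[H(x,y):=\mu\big([0,x]\times[1/2,y]\big),\qquad M(x,y):=\mu\big((x,1/2]\times(y,1]\big).\]
The strategy is to show that $I_2$ can be written simultaneously as the $\mu$-average of $H$ and of $M$, deduce that both vanish $\mu$-almost everywhere, and then convert this into the constraint $x+y=1$ via the uniform marginals.

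First I would record the reduction coming from non-atomicity. Since $\mu$ has uniform marginals, it charges no vertical line $\{x=1/2\}$ and no horizontal line $\{y=1/2\}$, so $H$ is continuous and strict and non-strict inequalities may be exchanged freely. On the box one always has $x_i\le 1/2\le y_j$, hence in the chain $x_1<x_2<y_1<y_2$ the middle inequality $x_2<y_1$ fails only on the $\mu^{\otimes2}$-null set $\{x_2=1/2\}\cup\{y_1=1/2\}$. Therefore
\[I_2(\mu)=\mu^{\otimes2}\big(\{x_1<x_2,\ y_1<y_2\}\big).\]
Integrating one point out first, from each of the two symmetric sides, then gives by Tonelli the pair of identities
\[\int_{\Delta}H\,d\mu=I_2(\mu)=\int_{\Delta}M\,d\mu,\]
because the inner integral equals $H(x_2,y_2)$ when $(x_1,y_1)$ is integrated out, and $M(x_1,y_1)$ when $(x_2,y_2)$ is.

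Next I would feed in $I_2(\mu)=0$. As $H\ge 0$ and $M\ge 0$, the two identities force $H=0$ and $M=0$ $\mu$-almost everywhere. The uniform marginals yield, by inclusion-exclusion on the box, the pointwise relation
\[M(x,y)=H(x,y)+\big(1-x-y\big),\]
using $\mu([0,1/2]\times(y,1])=1-y$, $\mu([0,x]\times[1/2,1])=x$ and $\mu([0,x]\times(y,1])=x-H(x,y)$. Evaluating this wherever $H=M=0$ gives $x+y=1$ for $\mu$-almost every $(x,y)$, so $\mu$ is concentrated on the anti-diagonal segment $\{(t,1-t):t\in[0,1/2]\}$. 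A measure on this segment is determined by its image under $t\mapsto x$, a homeomorphism onto $[0,1/2]$; for $\mu$ this image is exactly the $x$-marginal $\Leb([0,1/2])$, which is also the image of $\Omega$ by definition, whence $\mu=\Omega$.

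I expect the only genuine obstacle to be the measure-theoretic bookkeeping rather than any new idea: one must justify dropping the middle inequality $x_2<y_1$, interchanging the order of integration, and identifying open with closed rectangles. All three rest on the single fact that the uniform marginals are non-atomic, so that $\mu$ charges no horizontal or vertical line and $H$ is continuous. Once this is in place, the two representations of $I_2$ together with the elementary inclusion-exclusion identity do all the work.
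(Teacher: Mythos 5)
Your proof is correct, and it takes a genuinely different route from the paper's. The paper fixes $a\le b$, introduces the three rectangles $S=[0,a]\times[1/2,1-b]$, $T=[0,a]\times[1-b,1]$, $Q=[a,1/2]\times[1-b,1]$, and argues by contradiction: uniform marginals give $\mu(Q)=b-a+\mu(S)$, and since the crossing indicator is (a.e.) identically $1$ on $S\times Q$, $\mu(S)>0$ would force $I_2(\mu)\ge\mu(S)\mu(Q)>0$; hence $\mu(S)=0$ and the distribution function satisfies $F_\mu(a,b)=\min(a,b)=F_\Omega(a,b)$. You instead integrate the crossing indicator by Tonelli in the two possible orders to get $I_2(\mu)=\int H\,d\mu=\int M\,d\mu$, conclude $H=M=0$ $\mu$-a.e., and convert this into $x+y=1$ $\mu$-a.e. via the inclusion--exclusion identity $M=H+(1-x-y)$. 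Both arguments exploit the same mechanism --- $I_2=0$ forbids mass simultaneously in a lower-left and an upper-right rectangle, and the uniform marginals trade mass between the two --- but yours replaces the paper's pointwise contradiction, carried out for every pair $(a,b)$ with a case split $a\le b$ versus $a\ge b$, by a single averaging identity, and it makes the concentration on the antidiagonal explicit before identifying the measure. The price is the extra (routine) final step that a measure carried by the graph $\{(t,1-t):t\in[0,1/2]\}$ is determined by its $x$-marginal, whereas the paper gets $\mu=\Omega$ directly from equality of distribution functions. Your measure-theoretic caveats (dropping the middle inequality $x_2<y_1$, identifying open and closed rectangles, interchanging integrals of a nonnegative indicator) are all correctly discharged by the non-atomicity of the uniform marginals, so there is no gap.
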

\begin{proof}
 Consider a variation of the distribution function for a measure $\rho\in\tilde{\Gamma}$:
 \[F_{\rho}(a,b):=\rho([0,a]\times[1-b,1]),\]
 for $a,b\in[0,1/2]$. To prove the lemma it is enough to show that 
 \[F_{\mu}(a,b)=F_{\Omega}(a,b)=\min(a,b).\]
Suppose $a\leq b$ (the other case is done similarly), and consider the three sets $S=[0,a]\times[1/2,1-b]$, $T=[0,a]\times [1-b,1],$ $Q=[a,1/2]\times[1-b,1]$ as in Figure \ref{example for proof of I_2}. We claim that $\int_S\,d\mu=0$; suppose this is not the case, then $\int_S\,d\mu>0$. Notice that since $\mu$ has uniform marginals on the square $[0,1/2]\times [1/2,1]$ then 
\[a=\int_{[0,a]\times [1/2,1]}\,d\mu=\int_{S\cup T}\,d\mu=\int_S\,d\mu+\int_T\,d\mu.\]
By a similar argument we have $b=\int_T\,d\mu+\int_Q\,d\mu,$ therefore $\int_Q\,d\mu=b-a+\int_S\,d\mu>0$. We consider thus 
\begin{align*}
 I_2(\mu)&=\int_{\Delta^2}\mathbb{1}[x_1<x<2<y_1<y_2]\,d\mu(x_1,y_1)\,d\mu(x_1,y_2)\\
 &\geq \int_{S\times Q}\mathbb{1}[x_1<x<2<y_1<y_2]\,d\mu(x_1,y_1)\,d\mu(x_1,y_2).
\end{align*}
Observe that the characteristic function $\mathbb{1}[x_1<x_2<y_1<y_2]$ is equal to $1$ on the set $S\times Q$, hence $I_2(\mu)\geq\int_{S\times Q}\,d\mu\otimes d\mu=\mu(S)\cdot\mu(Q)>0$, which is a contradiction. Thus $\int_S\,d\mu=0$ as claimed. This implies that
\[F(a,b)=\mu(T)=\mu(T\cup S)=a\]
and the proof is concluded. 
\end{proof}
\begin{figure}

 \begin{center}
 \[\begin{tikzpicture}[scale=0.5]
\draw [thick, <->] (0,10) -- (0,0) -- (10,0);
\draw [dashed](0,0)--(9,9)--(0,9);
\draw (0,4.5) rectangle (2,6);
\draw [fill=lightgray](0.2,4.7) rectangle (0.45,5.1);
\node at (1,5){S};
\draw (0,6) rectangle (2,9);
\node at (1,7){T};
\draw (2,6) rectangle (4.5,9);
\draw [fill=lightgray](2.5,6.3) rectangle (2.9,6.7);
\node at (3.25,7){Q};
\draw [thick] (-.1,4.5) node[left]{1/2} -- (.1,4.5);
\draw [thick] (-.1,6) node[left]{1-b} -- (.1,6);
\draw [thick] (4.5,-.1) node[below]{$\frac{1}{2}$} -- (4.5,.1);
\draw [thick] (2,-.1) node[below]{a} -- (2,.1);
 \end{tikzpicture}\]
 \end{center}
 \caption{Example of the area division in the proof of Lemma \ref{proposition mu=lambda_omega}. Here we picture $a=2/9\leq b=3/9$. If the measure $\mu$ has non zero weight inside $S$ (here is pictured as the gray area), then it has also non zero weight in $Q$, and therefore $I_2(\mu)\neq 0$.}\label{example for proof of I_2}
 \end{figure}
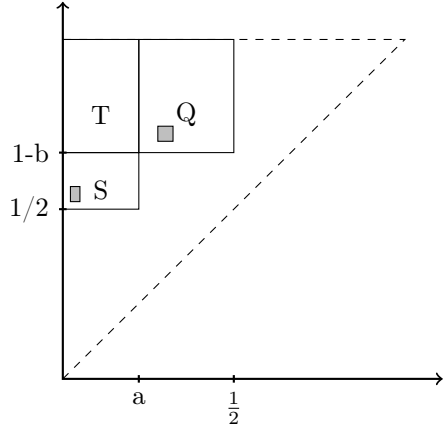
\begin{proof}[Proof of Proposition \ref{minimizing I}]
 It is easy to see that $I(\Omega)=0$. Suppose on the other hand that $I(\mu)=0$, then $I_1(\mu)=\frac{1}{4}+\frac{I_2(\mu)}{2}\leq \frac{1}{4}$ by Lemma \ref{maximizing I_1}. This implies that $I_2(\mu)=0$ and thus $I_1(\mu)=1/4$; hence $\mu\in\tilde{\Gamma}$ by Lemma \ref{maximizing I_1}, and we can apply Lemma \ref{proposition mu=lambda_omega} to conclude that $\mu=\Omega$.
\end{proof}

\section{Convergence in the weak* topology}\label{section final results}
In this section we prove the main result of the paper, that is, that in the weak* topology $\mu_{\pi^{(n)}}$ converges almost surely to $\Omega$ when $\pi^{(n)}$ is a random set partition distributed with the superplancherel measure. In order to do this we show some necessary lemmas, which relate the entropy $I$ to the L\'evy-Prokhorov metric on measures. We proceed as following: we show that the space $M^{\leq 1}(\Delta):=\{\mu\mbox{ measure on }\Delta\mbox{ s.t. }\int_{\Delta}\,d\mu\leq 1\}$ of subprobabilities on $\Delta$ is compact, and then we verify that $\Gamma$ is closed in $M^{\leq 1}(\Delta)$. We check then that both $I_1$ and $I_2$ are continuous as functions $\Gamma\to\R$. The proofs are mostly based on known theorems regarding probabilities, adapted in our case to subprobabilities.

Throughout the section, consider $(X,|\cdot|)$ a metric space and let $\mathcal{C}(X,\R)$ the set of continuous bounded functions $X\to\R$.
\begin{defi}
Let $(X,|\cdot|)$ be a metric space, then $M^{\leq 1}(X)$ and $M^1(X)$ are respectively the space of subprobabilities on $X$ and probabilities on $X$.
\end{defi}
We endow both $M^{\leq 1}(X)$ and $M^1(X)$ with the weak* topology, that is, consider $\{\mu_n\}_{n\in\mathbb{N}}\subseteq M^{\leq 1}(X),$ $\mu\in M^{\leq 1}(X)$ (resp. $\{\mu_n\}_{n\in\mathbb{N}}\subseteq M^1(X),\mu\in M^1(X)$), then we say that $\mu_n\overset{w^*}\to\mu$ in $M^{\leq 1}(X)$ (resp. $M^1(X)$) if
\[\int f(x) \,d\mu_n(x)\to\int f(x) \,d\mu(x)\]
for each $f\in\mathcal{C}(X,\R)$.
\smallskip

For a subset $Y\subseteq X$ and $\epsilon>0$ the $\epsilon-$neighborhood of $Y$ is
\[Y^{\epsilon}:=\{x\in X\mbox{ s.t. there exists }y\in Y\mbox{ with }\|x-y\|<\epsilon\}.\]
The L\'evy-Prokhorov metric is defined as
\[d_{L-P}(\mu,\nu)=\inf\{\epsilon>0\mbox{ s.t. }\mu(Y)\leq \nu(Y^{\epsilon})+\epsilon\mbox{ and }\nu(Y)\leq \mu(Y^{\epsilon})+\epsilon\mbox{ for each }Y\subseteq X\mbox{ measurable}\}.\]
It is well known that convergence with the L\'evy-Prokhorov metric is equivalent to the weak* convergence. For an introduction on the subject, see \cite{billingsley2013convergence}.
\smallskip 

\iffalse
It is known that if $X$ is compact  then the space $\mathcal{C}(X,\R)$ is complete, that is, there exists a dense countable subset $\{f_n\}\subseteq \mathcal{C}(X,\R)$. This allows us to associate a metric to the weak* topology as follows:
\[\|\mu\|=\sum_{n\in\N}\frac{1}{\|f_n\|_{\infty}2^n}\left|\int f_n \,d\mu\right|,\]
and define the  distance as 
\[\sum_{n\in\N}\frac{1}{\|f_n\|_{\infty}2^n}\left|\int f_n \,d\mu-\int f_n\,d\nu\right|\]
for each $\mu,\nu\in M^{\leq 1}(X)$.
\smallskip\fi

As shown, for example, in \cite[Theorem 29.3]{billingsley2012probability}, if $X$ is compact then $M^1(X)$ is compact. The same is true for $M^{\leq 1}(X)$.
\iffalse It is straightforward to adapt the proof to the space of subprobabilities $M^{\leq 1}(X)$. We include it out of completeness; we use a result due to Riesz-Markov-Kakutami \cite{gray1984shaping}, which states the following:
\begin{lemma}
Let $X$ be a compact metric space. If $\omega$ is a positive linear functional on the space $\mathcal{C}_c(X)$ of continuous compactly supported real functions on $X$, there exists a Borel measure $\mu$ on $X$ such that $\omega(f)=\int f \,d\mu$.
\end{lemma}\fi
\begin{lemma}
Let $X$ be a compact metric space, then $M^{\leq 1}(X)$ is compact according to the weak* topology.
\end{lemma}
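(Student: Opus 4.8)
The plan is to reduce the compactness of the space of subprobabilities to the already-cited compactness of the space of probability measures $M^1(X)$. The key observation is that a subprobability on $X$ is almost a probability measure on a slightly enlarged space; the natural device is to adjoin an isolated ``cemetery'' point $\infty$ to $X$ and convert the missing mass into mass placed at $\infty$. Concretely, I would form the one-point extension $\hat{X}:=X\sqcup\{\infty\}$, declaring $\infty$ to be an isolated point, so that $\hat{X}$ is again a compact metric space (one extends the metric by setting $|x-\infty|=1$ for all $x\in X$, and rescaling the metric on $X$ to be bounded by $1$ if necessary, which does not change the topology). I would then define a map
\[
\Phi\colon M^{\leq 1}(X)\to M^1(\hat{X}),\qquad \Phi(\mu):=\mu+\bigl(1-\mu(X)\bigr)\delta_{\infty},
\]
which sends each subprobability to an honest probability measure on $\hat{X}$ by dumping the defect $1-\mu(X)\in[0,1]$ onto the cemetery point.

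Next I would verify that $\Phi$ is injective (clear, since restricting $\Phi(\mu)$ back to $X$ recovers $\mu$) and that its image is exactly the closed subset $\{\nu\in M^1(\hat{X}) : \nu(\{\infty\}) \text{ arbitrary}\}=M^1(\hat{X})$ — in fact every probability on $\hat{X}$ arises this way, so $\Phi$ is a bijection onto $M^1(\hat{X})$. The crucial step is showing $\Phi$ is a homeomorphism for the weak* topologies. Because $\infty$ is isolated, a bounded continuous $g\in\mathcal{C}(\hat{X},\R)$ is determined by its restriction $g|_X\in\mathcal{C}(X,\R)$ together with the single value $g(\infty)$, and one computes
\[
\int_{\hat{X}} g\,d\Phi(\mu)=\int_X g|_X\,d\mu+g(\infty)\bigl(1-\mu(X)\bigr).
\]
Taking $g\equiv 1$ shows $\mu\mapsto\mu(X)$ is continuous, and then for general $g$ the right-hand side is a continuous function of $\mu$ in the weak* topology on $M^{\leq 1}(X)$; conversely, given any $f\in\mathcal{C}(X,\R)$ one recovers $\int_X f\,d\mu$ by choosing $g$ extending $f$ with $g(\infty)=0$. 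Hence $\Phi(\mu_n)\overset{w^*}\to\Phi(\mu)$ in $M^1(\hat{X})$ if and only if $\mu_n\overset{w^*}\to\mu$ in $M^{\leq 1}(X)$, so $\Phi$ is a homeomorphism.

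Finally I would invoke the cited result (\cite[Theorem 29.3]{billingsley2012probability}): since $\hat{X}$ is a compact metric space, $M^1(\hat{X})$ is compact in the weak* topology. As $\Phi$ is a homeomorphism onto $M^1(\hat{X})$, compactness transfers back to $M^{\leq 1}(X)$, completing the proof. The main obstacle — really the only subtle point — is checking the equivalence of the two weak* topologies under $\Phi$; everything hinges on the fact that $\infty$ is isolated, which forces continuous functions on $\hat{X}$ to split cleanly into a continuous part on $X$ and an independent value at $\infty$, and on first verifying that total mass $\mu\mapsto\mu(X)$ is weak* continuous on $M^{\leq 1}(X)$ (which follows by testing against the constant function $1\in\mathcal{C}(X,\R)$).
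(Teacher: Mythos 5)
Your proposal is correct and follows essentially the same route as the paper: both adjoin an isolated cemetery point, identify $M^{\leq 1}(X)$ with $M^{1}(X\cup\{\partial\})$ via the map sending the mass defect to the cemetery, and transfer compactness from the space of probability measures on the enlarged compact space. The only difference is that you spell out the verification that this identification is a weak* homeomorphism, which the paper dismisses as ``clearly''.
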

\begin{proof}
We can encode subrobabilities on $X$ as probabilities on $X\cup\{\partial\}$, where $\partial\notin X$ is called a \emph{cemetery} point, as follows:
\[\phi\colon M^{\leq 1}(X)\to M^{1}(X\cup\{\partial\})\]
\[\phi(\mu)(A)=\mu(A)\mbox{ if }A\subseteq X\mbox{ and }\phi(\mu)(\partial)=1-\mu(X).\]
Then $\phi$ is clearly an homeomorphism (with the obvious topology on $X\cup\{\partial\}$). Since $X\cup\{\partial\}$ is compact, then so is $M^1(X\cup\{\partial\})$, and thus also $M^{\leq 1}(X).$
\end{proof}

\begin{lemma}
 The set $\Gamma:=\{\mbox{subprobabilities }\mu \mbox{ on }\Delta\mbox{ s.t. }\mu\mbox{ has subuniform marginals}\}$ is closed in the set of subprobabilities $M^{\leq 1}(\Delta)$. In particular, $\Gamma$ is compact.
\end{lemma}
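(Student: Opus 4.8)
The plan is to deduce compactness from closedness. The previous lemma shows that $M^{\le 1}(\Delta)$ is compact (note that $\Delta$ is a closed and bounded, hence compact, subset of $\R^2$), and a closed subset of a compact space is compact; so the whole task reduces to proving that $\Gamma$ is closed in $M^{\le 1}(\Delta)$. Since $\Delta$ is a compact metric space, the weak* topology on $M^{\le 1}(\Delta)$ is metrizable---for instance by the L\'evy-Prokhorov metric recalled above, whose convergence is equivalent to weak* convergence---so I may check closedness sequentially. Thus I would fix a sequence $\{\mu_n\}\subseteq\Gamma$ with $\mu_n\overset{w^*}\to\mu$ for some $\mu\in M^{\le 1}(\Delta)$ and show $\mu\in\Gamma$. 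As $\mu$ is automatically a subprobability on $\Delta$, the only point to verify is that $\mu$ has subuniform marginals, i.e.\ $\mu(I\times[0,1])\le|I|$ and $\mu([0,1]\times J)\le|J|$ for all intervals $I,J\subseteq[0,1]=\pi_1(\Delta)=\pi_2(\Delta)$.

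The key step, and the only real obstacle, is that weak* convergence controls only integrals of continuous functions, whereas the marginal condition is phrased through the indicator of a strip, which is discontinuous. The plan is to sandwich this indicator between continuous bump functions. I would fix an interval $I=[a,b]\subseteq[0,1]$ and $\epsilon>0$, and choose a continuous $f_\epsilon\colon[0,1]\to[0,1]$ with $\mathbb{1}_{[a,b]}\le f_\epsilon\le\mathbb{1}_{[a-\epsilon,b+\epsilon]}$ (a trapezoid). Setting $g_\epsilon(x,y):=f_\epsilon(x)$, a bounded continuous function on $\Delta$, one gets
\[\mu(I\times[0,1])\le\int_\Delta g_\epsilon\,d\mu=\lim_{n\to\infty}\int_\Delta g_\epsilon\,d\mu_n\le b-a+2\epsilon,\]
where the first inequality uses $\mathbb{1}_{[a,b]}\le f_\epsilon$, the equality is the definition of weak* convergence, and the last inequality holds because $\int_\Delta g_\epsilon\,d\mu_n\le\mu_n([a-\epsilon,b+\epsilon]\times[0,1])\le b-a+2\epsilon$ for every $n$ by the subuniform marginals of $\mu_n$. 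Letting $\epsilon\to0$ gives $\mu(I\times[0,1])\le|I|$. Running the identical argument with $g_\epsilon(x,y):=f_\epsilon(y)$ yields $\mu([0,1]\times J)\le|J|$ for every interval $J$.

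Having verified both marginal bounds, $\mu$ has subuniform marginals, so $\mu\in\Gamma$; hence $\Gamma$ is (sequentially, and therefore) closed in $M^{\le 1}(\Delta)$, and being a closed subset of a compact space it is compact. The routine parts are the existence of the trapezoidal cutoff $f_\epsilon$ and the elementary estimate that the length of $[a-\epsilon,b+\epsilon]$ stays $\le b-a+2\epsilon$ even after intersecting with $[0,1]$; the one conceptual point to get right is the \emph{direction} of the sandwiching, namely that bounding $\mu$ of a strip from \emph{above} requires approximating its indicator from above by continuous functions.
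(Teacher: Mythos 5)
Your proof is correct and follows essentially the same route as the paper's: both reduce to sequential closedness, approximate the indicator of a vertical (or horizontal) strip from above by a continuous cutoff function, and pass to the limit using weak* convergence (the paper phrases this as a contradiction argument via Urysohn's lemma with a fixed enlargement $\delta/3$, whereas you argue directly with an explicit trapezoid and let $\epsilon\to 0$, but the mechanism is identical). Compactness then follows in both cases from $\Gamma$ being a closed subset of the compact space $M^{\le 1}(\Delta)$.
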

\begin{proof}
 Let $\{\mu_n\}$ be a sequence in $\Gamma$ converging to $\mu\in M^{\leq 1}(\Delta)$, we prove that $\mu\in\Gamma$. Suppose the contrary, we set without loss of generality that $\mu$ is not subuniform in the $x$-coordinates. Then there exists $(a,b)\in\Delta$ such that 
 \[\int_{[a,b]\times[0,1]}\,d\mu=b-a+\delta\]
 for $\delta>0$. Set $K=[a,b]\times[0,1]$ and $U=(a-\delta/3,b+\delta/3)\times[0,1]$. 
 By Uyshion's Lemma there exists a function $f\in\mathcal{C}(\Delta,[0,1])$ such that $\mathbb{1}_K(x,y)\leq f(x,y)\leq \mathbb{1}_U(x,y)$ for each $(x,y)\in\Delta$. Hence 
 \[\int_{\Delta}f(x,y)\,d\mu_n\leq\int_{[a-\frac{\delta}{3},b+\frac{\delta}{3}]\times[0,1]}\,d\mu_n\leq b-a+\delta-\frac{2}{3}.\]
 Since $\int f\,d\mu_n\to\int f\,d\mu$, this implies that $\int_{\Delta}f(x,y)\,d\mu\leq b-a+\delta-2/3$, contradiction. 
\end{proof}
The following lemma can be found in \cite[Theorem 29.1]{billingsley2012probability}:
\begin{lemma}
 Let $f\in\mathcal{C}(\Delta,\R)$ be bounded, then the functional that maps $\mu$ to $\int f\,d\mu$ is continuous. In particular, $I_1(\mu)=\int_{\Delta}(y-x)\,d\mu$ is continuous.
\end{lemma}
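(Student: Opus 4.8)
The plan is to observe that this statement is, up to a metrizability argument, essentially a restatement of the definition of the weak* topology, and then to reduce the functional $I_1$ to the first assertion. First I would recall that by the two preceding lemmas the space $M^{\leq 1}(\Delta)$ is compact, and since the weak* topology on it is induced by the L\'evy--Prokhorov metric $d_{L-P}$ it is in particular metrizable. On a metrizable space a real-valued map is continuous if and only if it is sequentially continuous, so it suffices to prove that the map $L_f(\mu):=\int f\,d\mu$ is sequentially continuous for a fixed $f\in\mathcal{C}(\Delta,\R)$.

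For the sequential continuity I would take an arbitrary sequence $\{\mu_n\}\subseteq M^{\leq 1}(\Delta)$ with $\mu_n\overset{w^*}\to\mu$. By the definition of weak* convergence (equivalently, by its equivalence with $d_{L-P}$-convergence recalled in the excerpt), we have $\int g\,d\mu_n\to\int g\,d\mu$ for every $g\in\mathcal{C}(\Delta,\R)$; specializing to $g=f$ yields $L_f(\mu_n)\to L_f(\mu)$. This is precisely sequential continuity of $L_f$, hence $L_f$ is continuous.

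The ``in particular'' clause then follows by applying the first part to $g(x,y)=y-x$. The only point to verify is that $g$ is admissible, i.e.\ continuous and bounded on $\Delta$: continuity is clear, and since $\Delta\subseteq[0,1]^2$ is compact we have $|y-x|\leq 1$ on $\Delta$, so $g$ is bounded. Thus $I_1=L_g$ is continuous.

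I do not expect a genuine obstacle here, since the content is essentially built into the definition of the weak* topology. The only subtlety is the passage from sequential continuity to topological continuity, and this is exactly where the metrizability via $d_{L-P}$ supplied by the earlier discussion is used. If one preferred to avoid sequences altogether, one could instead note that the weak* topology is by construction the coarsest topology rendering all the maps $\mu\mapsto\int g\,d\mu$ continuous, so each $L_f$ is continuous directly; but the sequential formulation is the one matching the definition used in the excerpt.
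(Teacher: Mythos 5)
Your proposal is correct and matches the intended argument: the paper itself gives no proof, simply citing Billingsley's Theorem 29.1, and the content of that reference is exactly your observation that continuity of $\mu\mapsto\int f\,d\mu$ for bounded continuous $f$ is built into the definition of weak* convergence (with metrizability via $d_{L-P}$ bridging sequential and topological continuity). The specialization to $g(x,y)=y-x$, bounded on the compact set $\Delta$, is handled the same way.
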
\iffalse
\begin{proof}
 We show that for each $\epsilon>0$ there exists $\delta >0$ such that if $d_{L-P}(\mu,\nu)<\delta$ then $|\int f\,d\mu-\int f\,d\nu|<\epsilon$. Recall that the set $\mathcal{C}(\Delta,\R)$ has a countable dense subset $\{f_n\}$. Fix $\epsilon>0$ and consider $f_{\bar{n}}$ such that $\|f-f_{\bar{n}}\|_{\infty}<\epsilon/3$. Set 
 \[\delta=\frac{\epsilon}{3\cdot 2^{\bar{n}}\cdot\|f_{\bar{n}}\|_{\infty}}.\]
 Notice that since $f$ is bounded and $\|f-f_{\bar{n}}\|_{\infty}<\epsilon/3$ then $\|f_{\bar{n}}\|_{\infty}<\infty$. 
 \smallskip
 
 If $\delta>d_{L-P}(\mu,\nu)=\sum_{n\in\N}\frac{1}{\|f_n\|_{\infty}2^n}|\int f_n \,d\mu-\int f_n\,d\nu|$, in particular
\[\frac{1}{\|f_{\bar{n}}\|_{\infty}2^{\bar{n}}}|\int f_{\bar{n}} \,d\mu-\int f_{\bar{n}}\,d\nu|<\delta.\] 
 Thus
\begin{align*}
 \left|\int f\,d\mu-\int f\,d\nu\right|&\leq \left|\int f\,d\mu-\int f_{\bar{n}}\,d\mu\right|+\left|\int f_{\bar{n}}\,d\mu-\int f_{\bar{n}}\,d\nu\right|+\left|\int f_{\bar{n}}\,d\nu-\int f\,d\nu\right|\\
 &<\frac{\epsilon}{3}+2^{\bar{n}}\|f_{\bar{n}}\|_{\infty}\delta+\frac{\epsilon}{3}=\epsilon. \qedhere
\end{align*} 
\end{proof}\fi
To prove the continuity of $I_2$ we need the following proposition, which can be found in \cite[Theorem 29.2]{billingsley2012probability}:
\begin{proposition}
 Suppose that $h\colon\R^k\to\R^j$ is measurable and that the set $D_h$ of its discontinuities is measurable. If $\nu_n\to\nu$ in $\R^k$ and $\nu(D_h)=0$, then $h_{\ast}\nu_n\to h_{\ast}\nu$ in $\R^j$.
\end{proposition}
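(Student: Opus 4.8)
The statement is the classical mapping theorem, so the plan is to reproduce the standard Portmanteau argument, taking care that here the $\nu_n$ are subprobabilities rather than probabilities. Write $\mu_n=h_{\ast}\nu_n$ and $\mu=h_{\ast}\nu$, both measures on $\R^j$. The goal is to show $\mu_n\to\mu$ in the weak* sense, and the plan is to verify the equivalent closed-set condition: $\limsup_n\mu_n(F)\le\mu(F)$ for every closed $F\subseteq\R^j$. First I would record the finite-measure version of the Portmanteau theorem, namely that weak* convergence $\nu_n\to\nu$ is equivalent to $\limsup_n\nu_n(C)\le\nu(C)$ for all closed $C$. This follows by sandwiching $\mathbb{1}_C$ between functions $f_{\delta}\in\mathcal{C}(\R^k,\R)$ with $\mathbb{1}_C\le f_{\delta}\le\mathbb{1}_{C^{\delta}}$ (with $C^{\delta}$ the $\delta$-neighborhood) and letting $\delta\to0$; since the constant function $1$ is an admissible test function the total masses converge, so the closed-set inequality does characterise weak* convergence for these finite measures. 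On the compact domains $\Delta$ and $\Delta^2$ relevant to us no mass escapes to infinity, and in general one transports the statement to genuine probabilities via the cemetery-point homeomorphism $\phi$ introduced in the previous lemma.

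The heart of the proof is the topological inclusion
\[\overline{h^{-1}(F)}\subseteq h^{-1}(F)\cup D_h.\]
Indeed, let $x\in\overline{h^{-1}(F)}$ and pick $x_m\in h^{-1}(F)$ with $x_m\to x$. If $h$ is continuous at $x$ then $h(x_m)\to h(x)$; since each $h(x_m)\in F$ and $F$ is closed, $h(x)\in F$, so $x\in h^{-1}(F)$. Otherwise $x\in D_h$ by the definition of the discontinuity set. Because $\nu(D_h)=0$ by hypothesis, this inclusion yields
\[\nu\big(\overline{h^{-1}(F)}\big)\le\nu\big(h^{-1}(F)\big)+\nu(D_h)=\nu\big(h^{-1}(F)\big)=\mu(F).\]

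To conclude I would chain the two ingredients. Since $h$ is measurable and $F$ is Borel, $h^{-1}(F)$ is measurable and $\mu_n(F)=\nu_n(h^{-1}(F))\le\nu_n\big(\overline{h^{-1}(F)}\big)$. Applying the closed-set form of Portmanteau to $\nu_n\to\nu$ with the closed set $\overline{h^{-1}(F)}$ gives
\[\limsup_n\mu_n(F)\le\limsup_n\nu_n\big(\overline{h^{-1}(F)}\big)\le\nu\big(\overline{h^{-1}(F)}\big)\le\mu(F),\]
which is exactly the closed-set criterion, hence $h_{\ast}\nu_n\to h_{\ast}\nu$ in $\R^j$.

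The main obstacle is precisely that $h$ is only measurable, so one cannot test weak* convergence directly against $g\circ h$ for $g\in\mathcal{C}(\R^j,\R)$: such a composite is bounded but discontinuous on $D_h$, and the defining property of weak* convergence applies only to continuous test functions. The inclusion $\overline{h^{-1}(F)}\subseteq h^{-1}(F)\cup D_h$ together with the negligibility $\nu(D_h)=0$ is exactly what repairs this, letting the convergence survive the passage through a discontinuous map. The only genuinely new point compared with the textbook statement is the subprobability bookkeeping, which is harmless on the compact sets $\Delta$ and $\Delta^2$ where the proposition will be applied (there the total masses converge automatically) and, in the general case, is absorbed by the cemetery-point device already introduced.
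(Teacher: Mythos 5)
Your argument is correct: the inclusion $\overline{h^{-1}(F)}\subseteq h^{-1}(F)\cup D_h$ combined with the closed-set form of the Portmanteau theorem is the standard proof of the mapping theorem, and your remarks on total-mass convergence correctly handle the subprobability setting. The paper itself offers no proof of this proposition --- it is quoted directly from Billingsley \cite[Theorem 29.2]{billingsley2012probability} --- and your write-up is essentially the textbook argument being cited, so there is nothing to reconcile.
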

\begin{lemma}
 The functional $I_2(\mu)=\int_{\Delta^2}\mathbb{1}[x_1<x<2<y_1<y_2]\,d\mu(x_1,y_1)\,d\mu(x_1,y_2)$ is continuous.
\end{lemma}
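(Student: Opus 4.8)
The plan is to reduce the continuity of $I_2$ to the continuous-mapping proposition just stated, applied to the product measures $\mu_n\otimes\mu_n$. So suppose $\mu_n\to\mu$ in $\Gamma$ (equivalently, in the weak* topology). The first step is to establish that $\mu_n\otimes\mu_n\to\mu\otimes\mu$ weak* on $\Delta^2$. For this I would invoke the Stone--Weierstrass theorem: the linear span of product functions $g(x_1,y_1)h(x_2,y_2)$ with $g,h\in\mathcal{C}(\Delta,\R)$ is dense in $\mathcal{C}(\Delta^2,\R)$. On such a product the integral factorises as $\left(\int g\,d\mu_n\right)\left(\int h\,d\mu_n\right)$, which converges to $\left(\int g\,d\mu\right)\left(\int h\,d\mu\right)$ by the previous lemma (continuity of $\mu\mapsto\int g\,d\mu$). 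Since every measure involved has total mass at most $1$, a routine $\epsilon/3$ approximation transfers the convergence from the dense span to all of $\mathcal{C}(\Delta^2,\R)$, giving $\mu_n\otimes\mu_n\to\mu\otimes\mu$.

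Next I would identify the integrand $g=\mathbb{1}[x_1<x_2<y_1<y_2]$ as a bounded measurable function $\R^4\to\R$ and locate its discontinuities. The set $D_g$ of discontinuity points is the topological boundary of the open region $\{x_1<x_2<y_1<y_2\}$, which is contained in the union of three hyperplanes,
\[
D_g\subseteq\{x_1=x_2\}\cup\{x_2=y_1\}\cup\{y_1=y_2\}.
\]

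The crux of the argument, and the only place where the hypothesis $\mu\in\Gamma$ is genuinely used, is to show $(\mu\otimes\mu)(D_g)=0$. Because $\mu$ has subuniform marginals, both its $x$-marginal $\mu_X$ and its $y$-marginal $\mu_Y$ satisfy $\mu_X(I)\le|I|$ and $\mu_Y(J)\le|J|$ for all intervals $I,J$; shrinking intervals around a single point then forces $\mu_X(\{a\})=\mu_Y(\{a\})=0$, so both marginals are atomless. Projecting $\mu\otimes\mu$ onto the appropriate pair of coordinates turns each offending hyperplane into a diagonal: $\{x_1=x_2\}$ pushes forward to the diagonal of $\mu_X\otimes\mu_X$, $\{y_1=y_2\}$ to that of $\mu_Y\otimes\mu_Y$, and $\{x_2=y_1\}$ to $\{u=v\}$ under $\mu_Y\otimes\mu_X$. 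By Fubini each such diagonal has measure $\int\mu_\bullet(\{t\})\,d\mu_\bullet(t)=0$, since the marginals are atomless, whence $(\mu\otimes\mu)(D_g)=0$.

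Finally I would apply the stated proposition with $h=g$, $\nu_n=\mu_n\otimes\mu_n$ and $\nu=\mu\otimes\mu$ (extended by zero off $\Delta^2$): as $\nu(D_g)=0$, it yields $g_\ast\nu_n\to g_\ast\nu$ weak* in $\R$. Since $g$ takes values in $\{0,1\}$, these pushforwards are supported on $[0,1]$, so integrating the identity map—bounded and continuous on that support—gives $I_2(\mu_n)=\int g\,d\nu_n\to\int g\,d\nu=I_2(\mu)$, which is the desired continuity. I expect the main obstacle to be precisely the measure-zero computation for $D_g$: this is where subuniformity of the marginals is indispensable, for without atomless marginals the jump set of the indicator could carry positive mass and the functional would fail to be continuous.
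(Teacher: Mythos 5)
Your proof is correct and follows essentially the same route as the paper: pass to $\mu_n\otimes\mu_n\to\mu\otimes\mu$, note that the discontinuity set of the indicator lies in the three hyperplanes $\{x_1=x_2\}\cup\{x_2=y_1\}\cup\{y_1=y_2\}$, use subuniform marginals to show this set is $(\mu\otimes\mu)$-null, and apply the continuous mapping proposition. You merely supply more detail than the paper does (Stone--Weierstrass for the product convergence, Fubini for the null-set computation), which is a welcome tightening rather than a different argument.
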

\begin{proof}
 We prove that $I_2$ is sequentially continuous, \emph{i.e.}, if $\mu_n\to\mu$ in weak* topology then $I_2(\mu_n)\to I_2(\mu)$. It is well known that in metric spaces continuity is equivalent to sequential continuity. Consider thus $\mu_n\to\mu$, then $\mu_n\otimes\mu_n\to\mu\otimes\mu$. Define the function $h\colon\Delta\times\Delta\to\R$, \[h(x_1,y_1,x_2,y_2):=\mathbb{1}[x_1<x_2<y_1<y_2].\] We claim that if $D_h$ is the set of discontinuities of $h$ then $\mu\otimes\mu(D_h)=0$. Indeed 
 \[D_h=\{(x_1,y_1,x_2,y_2)\in\Delta\times\Delta\mbox{ s.t. }x_1=x_2\mbox{ or }x_2=y_1\mbox{ or }y_1=y_2\};\]
Consider for example the set $\{(x_1,y_1,x_2,y_2)\in\Delta\times\Delta\mbox{ s.t. }x_1=x_2\}$. \iffalse The integral 
\[\int_{\Delta^2}\mathbb{1}[x_1=x_2<y_1<y_2]\,d\mu(x_1,y_1)\,d\mu(x_1,y_2)\]
corresponds to the (sub)probability of choosing at random (according to $\mu$) a point $(x_1,y_1)$ and a point $(x_2,y_2)$ which solve $x_1=x_2<y_1<y_2$.\fi Since $\mu$ has subuniform marginals, $x_1$ and $x_2$ chosen with $\mu$ will be almost surely different and thus $\{(x_1,y_1,x_2,y_2)\in\Delta\times\Delta\mbox{ s.t. }x_1=x_2\}$ has measure $0$. The same holds for the cases $x_2=y_1$ and $y_1=y_2$.
\smallskip

By applying the previous proposition we have therefore that $h_{\ast}(\mu_n\otimes\mu_n)\to h_{\ast}(\mu\otimes\mu)$. In particular
\[I_2(\mu_n)=h_{\ast}(\mu_n\otimes\mu_n)(1)\to h_{\ast}(\mu\otimes\mu)(1)=I_2(\mu).\qedhere\]
 \end{proof}
 As a consequence, the functional $I(\mu):=\frac{1}{2}-2I_1(\mu)+I_2(\mu)$ is continuous.\iffalse Since $\Gamma$ is compact, the closed map lemma states that $I$ is a closed map, that is, for each $\epsilon>0$ there exists $\delta>0$ such that if $|I(\mu)-I(\nu)|\leq \delta$ then $d_{L-P}(\mu,\nu)\leq\epsilon$. Equivalently,  \fi
 \begin{theorem}
  We have
  \[\SPl(\{\pi\vdash [n]\mbox{ s.t. }d_{L-P}(\mu,\Omega)>\epsilon\})\to 0.\]
 \end{theorem}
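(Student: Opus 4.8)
The plan is to run the standard entropy (large deviations) argument. Formula~\eqref{formula of superplancherel with entropy} expresses each atom of the superplancherel measure as $\SPl_n(\chi^{\pi})=\exp\left(-n^2\log q\cdot I(\mu_{\pi})+O(n)\right)$, and Proposition~\ref{minimizing I} tells us that $I$ vanishes only at $\Omega$. Since the probability of the ``bad'' event is a sum of such atoms, the strategy is to bound it by (number of set partitions of $[n]$) $\times$ (largest atom on the bad set), and then to verify that the exponential decay in $n^2$ of the largest atom dominates the subexponential growth of the number of terms.

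First I would convert the topological condition $d_{L-P}(\mu_{\pi},\Omega)>\epsilon$ into a quantitative bound on $I$. The set $C_{\epsilon}:=\{\mu\in\Gamma\mbox{ s.t. }d_{L-P}(\mu,\Omega)\geq\epsilon\}$ is a closed subset of the compact space $\Gamma$, hence compact, and it does not contain $\Omega$. Since $I$ is continuous (by the continuity of $I_1$ and $I_2$ established above), it attains its minimum on $C_{\epsilon}$; call this minimum $\delta=\delta(\epsilon)$. By Proposition~\ref{minimizing I}, $I(\mu)=0$ forces $\mu=\Omega\notin C_{\epsilon}$, so $\delta>0$. Consequently every $\pi\vdash[n]$ with $d_{L-P}(\mu_{\pi},\Omega)>\epsilon$ satisfies $\mu_{\pi}\in C_{\epsilon}$, and therefore $I(\mu_{\pi})\geq\delta$.

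Next I would carry out the union bound. Using~\eqref{formula of superplancherel with entropy} together with $I(\mu_{\pi})\geq\delta$, each atom of the bad event is at most $\exp\left(-n^2\log q\cdot\delta+O(n)\right)$, uniformly over the bad set. Writing $B_n$ for the number of set partitions of $[n]$ (the Bell number), we obtain
\[
\SPl\left(\{\pi\vdash[n]\mbox{ s.t. }d_{L-P}(\mu_{\pi},\Omega)>\epsilon\}\right)\leq B_n\cdot\exp\left(-n^2\log q\cdot\delta+O(n)\right).
\]
Since $\log B_n=O(n\log n)$, the right-hand side equals $\exp\left(-n^2\log q\cdot\delta+O(n\log n)\right)$, which tends to $0$ because the $n^2$ term dominates. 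This proves the theorem; moreover, as the bound is summable in $n$, a Borel--Cantelli argument upgrades it to the almost sure convergence asserted in Theorem~\ref{main result 1}.

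The main obstacle is the first step: extracting a \emph{uniform} positive lower bound $\delta$ for $I$ over the whole bad set. The pointwise statement $I(\mu)>0$ for $\mu\neq\Omega$ coming from Proposition~\ref{minimizing I} is by itself insufficient, since $I$ could in principle approach $0$ along a sequence of bad measures, which would let the counting factor $B_n$ swamp the decay. It is precisely the compactness of $\Gamma$ together with the continuity of $I$ that rules this out and turns the qualitative ``unique minimizer'' statement into the quantitative spectral gap needed to beat $B_n$. The remaining counting estimate and the $O(n)$ bookkeeping in~\eqref{formula of superplancherel with entropy} are routine.
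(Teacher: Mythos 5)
Your proposal is correct and follows essentially the same route as the paper: the paper also converts $d_{L-P}(\mu,\Omega)>\epsilon$ into a uniform lower bound $I(\mu)>\delta$ using the compactness of $\Gamma$ and the continuity of $I$ (via a sequential-compactness contradiction rather than your direct extreme-value formulation, which is equivalent), and then applies the same union bound with the Bell number estimate $B_n\leq n^n$ against the $\exp(-n^2\delta\log q+O(n))$ decay of each atom.
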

\begin{proof}
We claim that for each $\epsilon>0$ there exists $\delta>0$ such that if $d_{L-P}(\mu,\Omega)>\epsilon$ then $|I(\mu)|> \delta$. Fix $\epsilon>0$ and suppose the claim not true, so that for each $\delta>0$ there is $\mu_{\delta}$ with $d_{L-P}(\mu_\delta,\Omega)>\epsilon$ and $|I(\mu)|\leq \delta$. Set $\delta=1/n$, we obtain a sequence $(\mu_n)$ with $|I(\mu_n)|\leq 1/n$. Since $\Gamma$ is compact there exists a converging subsequence $(\mu_{i_n})$. Call $\overline{\mu}$ the limit of this subsequence. Since $I$ is continuous we have $I(\overline{\mu})=\lim_n I(\mu_{i_n})=0$. This is a contradiction, since $\Omega$ is the unique measure in $\Gamma$ with $I(\Omega)=0$, and the claim is proved.
\smallskip

 Fix $\epsilon>0$, then there exists $\delta>0$ such that if $d_{L-P}(\mu,\Omega)>\epsilon$ then $|I(\mu)|> \delta$. Define the set $N^n_{\epsilon}:=\{\pi\vdash [n]\mbox{ s.t. }d_{L-P}(\mu,\Omega)>\epsilon\}$, then 
 \[\SPl(N^n_{\epsilon})=\sum_{\pi\in N^n_{\epsilon}}\exp(-n^2\log q I(\mu_{\pi})+O(n)).\]
 Recall that the number of set partitions of $n$, called the Bell number, is bounded from above by $n^n$; therefore
 \[\SPl(N^n_{\epsilon})\leq n^n\sup_{\pi\in N^n_{\epsilon}}\exp(-n^2\log q I(\mu_{\pi})+O(n))<\exp(-n^2\delta\log q+O(n\log n))\to 0.\qedhere\] 
\end{proof}
We prove now Theorem \ref{main result 1} and Corollary \ref{main result 2}:
\begin{theorem}
  For each $n\geq 1$ let $\pi_n$ be a random set partition of $n$ distributed with the superplancherel measure $\SPl_n$, then
 \[\mu_{\pi_n}\to\Omega\mbox{ almost surely.} \] 
\end{theorem}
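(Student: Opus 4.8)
The plan is to upgrade the statement just proved---that $\SPl_n(N^n_\epsilon)\to 0$, where $N^n_\epsilon=\{\pi\vdash[n]\text{ s.t. }d_{L-P}(\mu_\pi,\Omega)>\epsilon\}$---to an almost sure statement by means of the Borel--Cantelli lemma. Since almost sure convergence is a property of the \emph{joint} law of the whole sequence $(\pi_n)_{n\ge1}$, the first thing I would do is fix a single probability space carrying all the $\pi_n$ simultaneously. The natural choice is the superplancherel growth process constructed in Section \ref{section superinduction}: the Markov chain with initial state the unique supercharacter of $U_1$ and transition measures $\rho_{U_n}^{U_{n+1}}$, which by the proposition of that section has $n$-th marginal equal to $\SPl_n$. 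Under this coupling the probability of the event $\{\pi_n\in N^n_\epsilon\}$ is exactly $\SPl_n(N^n_\epsilon)$, so the tail estimates already obtained can be read directly as probabilities for a genuine sequence of random variables.

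The decisive observation is that the bound established inside the proof of the previous theorem is not merely $o(1)$ but super-exponentially small: for each $\epsilon>0$ there is $\delta>0$ with
\[\SPl_n(N^n_\epsilon)\le\exp\!\left(-n^2\delta\log q+O(n\log n)\right).\]
Because the exponent diverges to $-\infty$ at rate $n^2$, the series $\sum_{n\ge1}\SPl_n(N^n_\epsilon)$ converges. Borel--Cantelli then gives that, almost surely, $\pi_n\in N^n_\epsilon$ for only finitely many $n$; equivalently $\limsup_n d_{L-P}(\mu_{\pi_n},\Omega)\le\epsilon$ almost surely. To remove the dependence on $\epsilon$ I would apply this to $\epsilon=1/k$ for each $k\in\N$ and intersect the resulting countably many full-measure events, obtaining one event of full probability on which $\limsup_n d_{L-P}(\mu_{\pi_n},\Omega)\le 1/k$ for every $k$, hence $d_{L-P}(\mu_{\pi_n},\Omega)\to 0$. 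Since convergence in the L\'evy--Prokhorov metric coincides with weak* convergence, this yields $\mu_{\pi_n}\to\Omega$ almost surely.

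I expect the genuinely delicate point to be only the first step, namely the insistence on a common probability space: without the coupling provided by the growth process the phrase ``almost surely'' would be meaningless, since the $\SPl_n$ a priori live on unrelated finite spaces. Once that coupling is in place everything else is a routine tail-summability argument---the quantitative $\exp(-cn^2)$ decay, which does the real work, is inherited unchanged from the preceding theorem, and the passage $\epsilon\to0$ is the standard countable-intersection trick.
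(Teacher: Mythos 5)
Your proof is correct and follows essentially the same route as the paper: the $\exp(-n^2\delta\log q+O(n\log n))$ tail bound gives summability, the first Borel--Cantelli lemma gives the almost sure statement for each $\epsilon$, and intersecting over $\epsilon=1/k$ finishes. Your additional care about fixing a common probability space is a legitimate clarification the paper leaves implicit (and note that Borel--Cantelli I needs only summability of the marginal probabilities, so any coupling---the growth process or simply the product measure---works equally well).
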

\begin{proof}
 As before, set $N^n_{\epsilon}:=\{\pi\vdash [n]\mbox{ s.t. }d_{L-P}(\mu,\Omega)>\epsilon\}$, so that \[\SPl(N^n_{\epsilon})<\exp(-n^2\delta\log q+O(n\log n)).\] Thus $\sum_n\SPl_n(N_n^{\epsilon})<\infty$ and we can apply the first Borel Cantelli lemma, which implies that $\limsup_nN_n^{\epsilon}$ has measure zero for each $\epsilon>0$, and therefore $\mu_{\pi_n}\to\Omega$ almost surely. 
\end{proof}

\begin{corollary}
  For each $n\geq 1$ let $\pi_n$ be a random set partition of $n$ distributed with the superplancherel measure $\SPl_n$, then
 \[\frac{\dim(\pi)}{n^2}\to\frac{1}{4}\mbox{ a.s.},\qquad \crs(\pi)\in O_P(n^2).\] 
\end{corollary}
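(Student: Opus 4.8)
The plan is to read both statistics directly off the limit-shape theorem just established, using the integral representations of $\dim$ and $\crs$ together with the continuity of the functionals $I_1$ and $I_2$. First I would recall the lemma expressing the statistics as integrals: it gives $\dim(\pi_n)=n^2 I_1(\mu_{\pi_n})$ exactly, while $\crs(\pi_n)=n^2 I_2(\mu_{\pi_n})+O(n)$, where the error term equals $-\frac14 d(\pi_n)-\frac12\#\{\text{adjacent arcs}\}$ and is therefore bounded in absolute value by $\frac34(n-1)$ \emph{uniformly} in $\pi_n$. Dividing by $n^2$, it suffices to control $I_1(\mu_{\pi_n})$ and $I_2(\mu_{\pi_n})$ as $n\to\infty$, since the error contributes only $O(1/n)$.

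Next I would invoke Theorem \ref{main result 1}, namely $\mu_{\pi_n}\to\Omega$ almost surely in the weak* topology. Because $I_1$ and $I_2$ were shown to be continuous on $\Gamma$ with respect to weak* convergence, the continuous mapping principle yields $I_1(\mu_{\pi_n})\to I_1(\Omega)$ and $I_2(\mu_{\pi_n})\to I_2(\Omega)$ almost surely.

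It then remains only to evaluate the two limits. Since $\Omega$ is the pushforward of $\Leb([0,1/2])$ under $x\mapsto(x,1-x)$, a direct computation gives $I_1(\Omega)=\int_0^{1/2}(1-2x)\,dx=\frac14$. For $I_2(\Omega)$ I would avoid integrating against the singular measure $\Omega$ and instead use Proposition \ref{minimizing I}: since $I(\Omega)=0$ and $I=\frac12-2I_1+I_2$, we get $I_2(\Omega)=2I_1(\Omega)-\frac12=0$. Combining these, $\dim(\pi_n)/n^2=I_1(\mu_{\pi_n})\to\frac14$ almost surely, and $\crs(\pi_n)/n^2=I_2(\mu_{\pi_n})+O(1/n)\to 0$ almost surely; in particular $\crs(\pi_n)=o(n^2)$ almost surely, which is stronger than the claimed $\crs(\pi_n)\in O_P(n^2)$.

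I do not expect any genuine obstacle here, as the heavy lifting is the limit-shape theorem itself. The only two points requiring care are already in hand: that the $O(n)$ term in the crossing formula is uniform over all set partitions of $[n]$, so that it disappears after dividing by $n^2$ and cannot disturb the almost-sure limit; and that weak* convergence transfers to the functionals $I_1$ and $I_2$, which is exactly what the continuity lemmas guarantee.
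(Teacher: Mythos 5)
Your proof is correct, but it follows a genuinely different route from the paper's. The paper does not invoke Theorem \ref{main result 1} here at all: it re-runs the entropy/Borel--Cantelli argument directly on the statistic, defining the bad set $N^n_{\epsilon,\dim}$ of partitions with $|\dim(\pi)/n^2-1/4|>\epsilon$, noting that such a deviation forces $I(\mu_\pi)>\epsilon$ (since $I_1\le 1/4$ on all of $\Gamma$ and $I_2\ge 0$, a deviation can only mean $I_1<1/4-\epsilon$, whence $I\ge 2\epsilon$), and then bounding $\SPl(N^n_{\epsilon,\dim})$ by $\exp(-n^2\epsilon\log q+O(n\log n))$ exactly as in the proof of the limit shape; the crossing statement is dispatched ``similarly''. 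You instead use the limit shape theorem as a black box and transfer it to the statistics via the continuous mapping theorem, using the continuity of $I_1$ and $I_2$ on $\Gamma$ (already established in Section \ref{section final results}) together with the evaluations $I_1(\Omega)=1/4$ and $I_2(\Omega)=0$. Both arguments are sound; the two delicate points in your version are handled correctly --- the $O(n)$ correction in the crossing formula is indeed uniform in $\pi$ (it is bounded by $\tfrac34(n-1)$), and the continuity of $I_2$ does apply at the limit point because $\Omega\in\Gamma$ has subuniform marginals, which is exactly the hypothesis under which the paper proves $\mu\otimes\mu(D_h)=0$. As for what each approach buys: the paper's route comes packaged with an explicit superexponential deviation bound for $\dim(\pi)/n^2$ (a large-deviation-type estimate), whereas yours is shorter, more modular, and actually delivers the stronger conclusion $\crs(\pi_n)=o(n^2)$ almost surely, rather than merely $\crs(\pi_n)\in O_P(n^2)$.
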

\begin{proof}
 Define $N^n_{\epsilon,\dim}:=\{\pi\vdash [n]\mbox{ s.t. }|\frac{\dim(\pi)}{n^2}-\frac{1}{4}|>\epsilon\}$, then for each $\epsilon\in N^n_{\epsilon,\dim}$ we have $I(\mu_{\pi})>\epsilon$. Hence $\SPl(N^n_{\epsilon,\dim})<\exp(-n^2\epsilon\log q+O(n\log n))\to 0$. As before, this implies $\sum_n\SPl_n(N_n^{\epsilon,\dim})<\infty$ and thus $\frac{\dim(\pi)}{n^2}\to\frac{1}{4}$ almost surely. The crossing case is done similarly.
\end{proof}

\section{Acknowledgments}
The author is deeply grateful to Valentin F{\'e}ray for introducing him to the subject and the help received in the development of the paper.

This research was founded by SNSF grant SNF-149461: ``Dual combinatorics of Jack polynomials''.

\bibliography{courant}{}

\newcommand{\etalchar}[1]{$^{#1}$}
\def\cprime{$'$}
\begin{thebibliography}{GGKK15}

\bibitem[AAB{\etalchar{+}}12]{aguiar2012supercharacters}
M.~Aguiar, C.~Andr{\'e}, C.~Benedetti, N.~Bergeron, Z.~Chen, P.~Diaconis,
  A.~Hendrickson, S.~Hsiao, I.~Isaacs, A.~Jedwab, et~al.
\newblock Supercharacters, symmetric functions in noncommuting variables, and
  related hopf algebras.
\newblock {\em Advances in Mathematics}, 229(4):2310--2337, 2012.

\bibitem[ACDS04]{arias2004super}
E.~Arias-Castro, P.~Diaconis, and R.~Stanley.
\newblock A super-class walk on upper-triangular matrices.
\newblock {\em J. Algebra}, 278(2):739--765, 2004.

\bibitem[And95]{andre1995basic}
C.~A.~M. Andr{\'e}.
\newblock Basic characters of the unitriangular group.
\newblock {\em J. Algebra}, 175(1):287--319, 1995.

\bibitem[Bil12]{billingsley2012probability}
P.~Billingsley.
\newblock {P}robability and {M}easure, {A}nniversary {E}dition, 2012.

\bibitem[Bil13]{billingsley2013convergence}
P.~Billingsley.
\newblock {\em Convergence of probability measures}.
\newblock John Wiley \& Sons, 2013.

\bibitem[BJBT14]{baker2014antipode}
D.~Baker-Jarvis, N.~Bergeron, and N.~Thiem.
\newblock The antipode and primitive elements in the {H}opf monoid of
  supercharacters.
\newblock {\em J. Algebr. Comb.}, 40(4):903--938, 2014.

\bibitem[BOO00]{BorodinOkounkovOlshanski2000}
A.~Borodin, A.~Okounkov, and G.~Olshanski.
\newblock Asymptotics of plancherel measures for symmetric groups.
\newblock {\em J. Amer. Math. Soc}, 13:481--515, 2000.

\bibitem[BT13]{bergeron2013supercharacter}
N.~Bergeron and N.~Thiem.
\newblock A supercharacter table decomposition via power-sum symmetric
  functions.
\newblock {\em Int. J. Algebr. Comput.}, 23(04):763--778, 2013.

\bibitem[CDKR14]{chern2014closed}
B.~Chern, P.~Diaconis, D.~M. Kane, and R.~C. Rhoades.
\newblock Closed expressions for averages of set partition statistics.
\newblock {\em Res. Math. Sci.}, 1(1):1--32, 2014.

\bibitem[CDKR15]{chern2015central}
B.~Chern, P.~Diaconis, D.~M. Kane, and R.~C. Rhoades.
\newblock Central limit theorems for some set partition statistics.
\newblock {\em Adv. Appl. Math.}, 70:92--105, 2015.

\bibitem[DI08]{diaconis2008supercharacters}
P.~Diaconis and I.~Isaacs.
\newblock Supercharacters and superclasses for algebra groups.
\newblock {\em Trans. Amer. Math. Soc.}, 360(5):2359--2392, 2008.

\bibitem[Dro80]{drozd1980tame}
J.~A. Drozd.
\newblock Tame and wild matrix problems.
\newblock In {\em Representation theory II}, pages 242--258. Springer, 1980.

\bibitem[GGKK15]{glebov2015finitely}
R.~Glebov, A.~Grzesik, T.~Klimo{\v{s}}ov{\'a}, and D.~Kr\'al'.
\newblock Finitely forcible graphons and permutons.
\newblock {\em J. Comb. Theory B}, 110:112--135, 2015.

\bibitem[IO02]{ivanov2002kerov}
V.~Ivanov and G.~Olshanski.
\newblock Kerov's central limit theorem for the {P}lancherel measure on {Y}oung
  diagrams.
\newblock In {\em Symmetric functions 2001: surveys of developments and
  perspectives}, pages 93--151. Springer, 2002.

\bibitem[Kel14]{keller2014generalized}
J.~Keller.
\newblock {\em Generalized supercharacter theories and Schur rings for Hopf
  algebras}.
\newblock PhD thesis, University of Colorado at Boulder, 2014.

\bibitem[Ker93]{kerov1993transition}
S.~Kerov.
\newblock Transition probabilities for continual {Y}oung diagrams and the
  {M}arkov moment problem.
\newblock {\em Funct. Anal. Appl.}, 27(2):104--117, 1993.

\bibitem[KV77]{kerov1977asymptotics}
S.~Kerov and A.~Vershik.
\newblock Asymptotics of the {P}lancherel measure of the symmetric group and
  the limiting form of {Y}oung tableaux.
\newblock In {\em Soviet Math. Dokl}, volume~18, pages 527--531, 1977.

\bibitem[LS77]{LoganShepp1977}
B.~F. Logan and L.~A. Shepp.
\newblock A variational problem for random {Y}oung tableaux.
\newblock {\em Adv. Math.}, 26(2):206--222, 1977.

\bibitem[MT09]{marberg2009superinduction}
E.~Marberg and N.~Thiem.
\newblock Superinduction for pattern groups.
\newblock {\em J. Algebra}, 321(12):3681--3703, 2009.

\bibitem[Rom15]{romik2015surprising}
D.~Romik.
\newblock {\em The surprising mathematics of longest increasing subsequences},
  volume~4 of {\em Textbooks}.
\newblock Cambridge University Press, 2015.

\bibitem[Sag13]{sagan2013symmetric}
B.~Sagan.
\newblock {\em The symmetric group: representations, combinatorial algorithms,
  and symmetric functions}, volume 203 of {\em Graduate Texts in Mathematics}.
\newblock Springer Science \& Business Media, 2013.

\bibitem[Yan10]{yan2010representations}
N.~Yan.
\newblock Representations of finite unipotent linear groups by the method of
  clusters.
\newblock {\em arXiv preprint arXiv:1004.2674}, 2010.

\end{thebibliography}

\bibliographystyle{alpha}

\end{document}